\documentclass[11pt]{amsart}

\usepackage{epigamath}


\usepackage[english]{babel}


\numberwithin{equation}{section}


\usepackage[shortlabels]{enumitem}
\setlist[enumerate,1]{label={\rm(\arabic*)}, ref={\rm\arabic*}} 

\usepackage{amsfonts, amssymb, amscd}
\usepackage{amsmath}
\usepackage{verbatim}
\usepackage{amssymb}
\usepackage{mathrsfs}
\usepackage{graphicx}
\usepackage{cite}
\usepackage[all]{xy}

\usepackage{graphicx}
\usepackage{float}


\newtheorem{theorem}{Theorem}[section]
\newtheorem{lemma}[theorem]{Lemma}
\newtheorem{proposition}[theorem]{Proposition}
\newtheorem{corollary}[theorem]{Corollary}

\theoremstyle{definition}
\newtheorem{definition}[theorem]{Definition}

\theoremstyle{remark}

\newtheorem{remark}[theorem]{Remark}


\def\C {{\mathbb C}}
\def\CC {{\mathbb C}}
\def\R {{\mathbb R}}
\def\RR {{\mathbb R}}
\def\Z {{\mathbb Z}}
\def\ZZ {{\mathbb Z}}
\def\PP {{\mathbb P}}
\def\Q {{\mathbb Q}}
\def\KK {{\mathbb K}}
\def\ii {{\rm i}}

\def\res {{\operatorname{Res}}}

\allowdisplaybreaks[3]

\DeclareMathOperator{\Spec}{Spec}
\DeclareMathOperator{\FM}{FM}
\DeclareMathOperator{\Sol}{Sol}
\DeclareMathOperator{\MB}{MB}
\renewcommand{\Box}{\operatorname{Box}}
\DeclareMathOperator{\Star}{Star}
\DeclareMathOperator{\Ann}{Ann}
\DeclareMathOperator{\Vol}{Vol}
\DeclareMathOperator{\supp}{supp}
\DeclareMathOperator{\Hom}{Hom}
\DeclareMathOperator{\bbGKZ}{bbGKZ}

\def\es{\mathrm{es}}
\def\orb{\mathrm{orb}}
\def\b{\mathrm{b}}
\def\ch{\mathrm{ch}}
\def\lowsim{\vbox to 0pt{\vss\hbox{$\scriptstyle\sim$}\vskip-1.8pt}}

\newcommand{\supth}[1]{\ensuremath{#1^{\mathrm{th}}}}


\EpigaVolumeYear{9}{2025} \EpigaArticleNr{11} \ReceivedOn{August 3, 2023}
\InFinalFormOn{July 16, 2024}
\AcceptedOn{October 31, 2024}

\title{Analytic continuation of better-behaved GKZ systems and Fourier--Mukai transforms}
\titlemark{Analytic continuation and Fourier--Mukai}

\author{Zengrui Han}
\address{Department of Mathematics, Rutgers University, Piscataway, NJ 08854, USA}
\email{zh223@math.rutgers.edu}

\authormark{Z.~Han}

\AbstractInEnglish{We study the relationship between solutions to better-behaved GKZ hypergeometric systems near different large radius limit points, and their geometric counterparts given by the $K$-groups of the associated toric Deligne--Mumford stacks. We prove that the $K$-theoretic Fourier--Mukai transforms associated to toric wall-crossing coincide with analytic continuation transformations of Gamma series solutions to the better-behaved GKZ systems, which settles a conjecture of Borisov and Horja.}

\MSCclass{14M25, 14J32, 14J33, 33C99}
\KeyWords{GKZ hypergeometric systems, Fourier--Mukai transforms, toric Deligne--Mumford stacks, Mellin--Barnes integrals, mirror symmetry}

\begin{document}



\maketitle

\begin{prelims}

\DisplayAbstractInEnglish

\bigskip

\DisplayKeyWords

\medskip

\DisplayMSCclass

\end{prelims}


\newpage

\setcounter{tocdepth}{1}

\tableofcontents


\section{Introduction}\label{sec.intro}

Borisov and Horja \cite{BH} introduced a better-behaved version of the hypergeometric systems of Gel'fand, Kapranov and Zelevinsky \cite{GKZoriginal}, in the sense that the solution spaces always have the expected dimensions in contrast to the original version where a rank-jumping phenomenon may occur. The better-behaved GKZ systems (bbGKZ systems) are thus more suitable for any kind of functorial consideration. Furthermore, it turns out that they are closely related to the moduli theory of hypersurfaces in toric varieties and play a crucial role in toric mirror symmetry. For example, they describe the Gauss--Manin
systems associated to the Landau--Ginzburg mirror potentials of toric Deligne--Mumford stacks (see \textit{e.g.}
\cite[Section~5.1]{CCIT}).

Due to the non-compactness of the toric Deligne--Mumford stacks we consider, there exist two such systems $\bbGKZ(C,0)$ and $\bbGKZ(C^{\circ},0)$, where the latter should be considered as a compactly supported version of the former. In \cite{BHconj}, Borisov and Horja formulated a pair of conjectures: one regarding the duality between these two systems (which was settled in full generality in \cite{BHan}) and another regarding the connection between the analytic continuation of solutions to these systems and certain Fourier--Mukai transforms. This paper is focused on the latter conjecture.

To formulate the main result more precisely, we first introduce our combinatorial setting and review the definition of the bbGKZ systems.

Let $C$ be a finite rational polyhedral cone in a vector space $N_{\RR}:=N\otimes_{\ZZ}\RR$, where $N$ is a lattice. We assume the ray generators of $C$ are lattice points in $N$ and lie on a primitive hyperplane $\deg(-)=1$, where $\deg$ is a linear function on $N$. These data define an affine Gorenstein toric variety $X=\Spec(\C[C^{\vee}\cap N^{\vee}])$. Consider a set $\{v_i\}_{i=1}^n$ of lattice points of degree $1$ which includes all ray generators of the cone $C$; a simplicial subdivision $\Sigma$ of $C$ based on this set gives a crepant resolution $\PP_{\Sigma}\rightarrow X$. Generally speaking, $\PP_{\Sigma}$ is a smooth toric Deligne--Mumford stack rather than a smooth toric variety.

\begin{definition}\label{defGKZ}
	Consider the system of partial differential equations on the collection of functions $\{\Phi_c(x_1,\ldots,x_n)\}$ in complex variables $x_1,\ldots, x_n$, indexed by the lattice points in $C$:
	\begin{align*}
		\partial_i\Phi_c=\Phi_{c+v_i},\quad \sum_{i=1}^n\langle\mu,v_i\rangle x_i\partial_i\Phi_c+\langle\mu,c\rangle\Phi_c=0
	\end{align*}
	for all $\mu\in N^{\vee}$, $c\in C$ and $i=1,\ldots,n$. We denote this system by $\bbGKZ(C,0)$. Similarly, by considering lattice points in the interior $C^{\circ}$ only, we can define $\bbGKZ(C^{\circ},0)$.
\end{definition}

Borisov and Horja introduced Gamma series solutions with values in the complexified $K$-group (or, equivalently, the orbifold cohomology) of the toric stack $\PP_{\Sigma}$ to these systems which is locally defined in a neighborhood of the large radius limit point corresponding to $\Sigma$  in \cite{BHconj}. As a result, the $K$-groups provide a local integral structure of the local systems of solutions to the bbGKZ systems. 

In this paper, we show that this local integral structure is compatible with the natural analytic continuation of solutions to the bbGKZ systems. More precisely, we prove that under the isomorphisms provided by Gamma series (the \textit{mirror symmetry maps}), the analytic continuation of solutions to bbGKZ systems from the neighborhood of one triangulation $\Sigma_+$ to the neighborhood of another adjacent triangulation $\Sigma_-$ coincides with the $K$-theoretic Fourier--Mukai transform associated to the flop $\PP_{\Sigma_-}\dashrightarrow\PP_{\Sigma_+}$.

\begin{theorem}[= Theorems~\ref{conj for usual system} and~\ref{conj for dual}]
        The following diagrams commute:
	\begin{align*}
		\xymatrix{
K_0\left(\PP_{\Sigma_+}\right)^{\vee}\ar[d]^{\FM^{\vee}}\ar[r]^-{-\circ\Gamma_+} & \Sol(\bbGKZ(C,U_+))\ar[d]^{\MB} \\
K_0\left(\PP_{\Sigma_-}\right)^{\vee}\ar[r]^-{-\circ\Gamma_-} & \Sol(\bbGKZ(C,U_-))\rlap{,} \\
    }
	\end{align*}
	\begin{align*}
    \xymatrix{
K_0^c\left(\PP_{\Sigma_+}\right)^{\vee}\ar[d]^{(\FM^c)^{\vee}}\ar[r]^-{-\circ\Gamma^{\circ}_+} & \Sol(\bbGKZ(C^{\circ}),U_+)\ar[d]^{\MB^c} \\
K_0^c\left(\PP_{\Sigma_-}\right)^{\vee}\ar[r]^-{-\circ\Gamma^{\circ}_-} & \Sol(\bbGKZ(C^{\circ}),U_-)\rlap{,} \\
    }
\end{align*}
        where the horizontal arrows are mirror symmetry maps and  $\FM$  and $\MB$  $($respectively, $\FM^c$ and $\MB^c)$ denote the Fourier--Mukai transforms $($respectively, analytic continuation transformations of solutions$)$.
\end{theorem}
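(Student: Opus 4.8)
The plan is to reduce everything to a single codimension-one wall-crossing, to compute the analytic continuation of the Borisov--Horja Gamma series across that wall by a Mellin--Barnes/residue argument, and then to match the transformation factor so produced against a Grothendieck--Riemann--Roch computation of the Fourier--Mukai kernel. Since adjacent triangulations $\Sigma_+,\Sigma_-$ differ by a single bistellar flip, the combinatorial input is a \emph{circuit}: a minimal affinely dependent subset of $\{v_i\}$ giving a relation $\sum_{i\in I_+}a_iv_i=\sum_{i\in I_-}b_iv_i$ with all $a_i,b_i>0$, together with the balancing identity $\sum_{i\in I_+}a_i=\sum_{i\in I_-}b_i$ forced by $\deg v_i=1$. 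From this data I would write down the toric common resolution $\PP_{\Sigma_+}\leftarrow\widetilde{\PP}\rightarrow\PP_{\Sigma_-}$, the flopped loci, and the normal bundles of the contracted strata, all as explicit equivariant objects. (For non-adjacent $\Sigma_\pm$ the theorem then follows, since both Fourier--Mukai transforms and analytic continuations compose along a chain of walls.)

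On the analytic side, I would represent the Gamma series solution attached to $\Sigma_+$ as an iterated Mellin--Barnes integral in the single extra parameter associated to the circuit relation, the integrand being a monomial in $x_1,\dots,x_n$ times a ratio of Gamma functions $\prod\Gamma(\langle\cdot\rangle+\cdots)$ times a $K$-theoretic coefficient; this is exactly the representation whose residue sum over the $\Sigma_+$-string of poles reproduces the Gamma series near $U_+$, and the notation $\operatorname{MB}$ in the statement reflects this mechanism. Pushing the contour past the complementary string of poles continues the solution into $U_-$; applying the reflection formula $\Gamma(z)\Gamma(1-z)=\pi/\sin\pi z$ to the factors indexed by $I_+\cup I_-$ reorganizes the new residue sum into the Gamma series attached to $\Sigma_-$, multiplied by an explicit factor built from the $a_i,b_i$ and from $\sin$-type terms. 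I would record this factor as a linear map $T\colon K_0(\PP_{\Sigma_+})^\vee\to K_0(\PP_{\Sigma_-})^\vee$ satisfying $\operatorname{MB}\circ(-\circ\Gamma_+)=(-\circ\Gamma_-)\circ T$, so that commutativity of the first diagram becomes the identity $T=\operatorname{FM}^\vee$.

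On the geometric side, I would compute $\operatorname{FM}^\vee$ directly: the kernel of the Fourier--Mukai transform for the flop is (a twist of) the structure sheaf of $\PP_{\Sigma_+}\times_X\PP_{\Sigma_-}$, and pulling back, tensoring, and pushing forward through $\widetilde{\PP}$, Grothendieck--Riemann--Roch expresses the transform as cap product with the Chern character of the relative structure sheaf times Todd classes of the normal bundles of the flopped strata --- a product over $I_+\cup I_-$ of \emph{Gamma classes} $\prod\widehat{\Gamma}(1+\text{Chern roots})$ and their duals. The heart of the proof is then the identity $T=\operatorname{FM}^\vee$ under the mirror maps: because the Borisov--Horja mirror map is built out of the Gamma class, the $\sin$-type factors coming from the reflection formula are precisely the ratios of Gamma classes coming from GRR, since $\widehat{\Gamma}(E)\,\widehat{\Gamma}(E^\vee)^{-1}$ is exactly such a product of $\sin$-type expressions in the Chern roots of $E$. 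This is the place where ``Gamma function $=$ Gamma class'' does the real work, in the spirit of the integral-structure philosophy.

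The step I expect to be the main obstacle is the treatment of \emph{resonance}. When the two strings of poles of the Mellin--Barnes integrand overlap one gets higher-order poles, hence logarithmic terms and the nilpotent, non-semisimple part of the $K$-valued solution, and one must check that the residue reorganization still matches the correspondingly non-diagonalizable Fourier--Mukai action; this requires tracking the full jet of the integrand at coincident poles rather than just leading residues. Closely tied to this is the stacky bookkeeping: $K_0(\PP_{\Sigma_\pm})$ decomposes over the twisted sectors of the inertia stack, the flop permutes these sectors according to the fractional parts of the circuit coefficients, and one must verify that both the Gamma series and the Fourier--Mukai kernel respect and correctly shuffle this decomposition. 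Finally, the second diagram --- with $\operatorname{bbGKZ}(C^\circ,0)$, $K_0^c$, and $(\operatorname{FM}^c)^\vee$ --- requires the compactly supported Gamma series $\Gamma^\circ_\pm$ and the corresponding pairing; I would deduce it from the first diagram using the duality of \cite{BHan} between $\operatorname{bbGKZ}(C,0)$ and $\operatorname{bbGKZ}(C^\circ,0)$ together with the compatibility of Fourier--Mukai transforms with Serre duality on $\PP_{\Sigma_\pm}$.
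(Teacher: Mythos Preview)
Your overall architecture is right: single wall, Mellin--Barnes continuation of the Gamma series via the reflection formula, match against the Fourier--Mukai side, then deduce the $C^\circ$ diagram from the $C$ diagram by duality. The second diagram is handled in the paper exactly as you propose, using the pairing result of \cite{BHan} and the fact that $\operatorname{FM}$ preserves the Euler pairing (your ``compatibility with Serre duality'' is this statement in disguise).

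Where you diverge from the paper is on the Fourier--Mukai side. You plan to compute $\operatorname{FM}^\vee$ from scratch via GRR on the common resolution, expressing it through Todd/Gamma classes, and then invoke the ``$\Gamma$-function $=$ $\Gamma$-class'' philosophy to match the $\sin$-type factors. The paper instead imports the explicit Borisov--Horja formula (Proposition~\ref{BHformula}, from \cite{MellinBarnes}) which already presents $\operatorname{FM}$ as a one-variable contour integral with kernel $T(r,\hat t)=\frac{1}{2\pi\ii(\hat t-1)}\prod_{j\in I_-}\frac{1-r_j^{-1}}{1-r_j^{-1}\hat t^{-h_j}}$; a direct residue computation of this kernel (Proposition~\ref{FMresidue}) produces exactly the same coefficient $C_{\gamma^{(k,r)}}$ that the Mellin--Barnes continuation produces (Proposition~\ref{essential AC}), so the matching is literal rather than structural. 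Your GRR route is viable (it is closer in spirit to \cite{CIJ}) but is more work and requires carefully setting up the inertia-stack GRR for the stacky blow-up; the paper's shortcut is simply to quote the already-proved combinatorial formula.

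There is also an organizing device in the paper that you do not mention and that makes both computations tractable: the Gamma series is split into an \emph{essential} part (terms whose support set $I(l)$ lies in an essential maximal cone) and a \emph{non-essential} part. On the non-essential part both $\operatorname{MB}$ and $\operatorname{FM}$ act as the identity for easy reasons (Propositions~\ref{non-eseential AC} and \ref{non-essential FM}), and all the nontrivial content is concentrated in the essential part, which is a one-parameter family in the direction $h$ of the circuit. Without isolating this piece your Mellin--Barnes argument would have to contend with infinitely many irrelevant terms. Finally, your stated obstacle ``resonance'' is handled implicitly in the paper by working in orbifold cohomology with the $D_j$ nilpotent: the series are polynomial in $D_j/2\pi\ii$, so higher-order poles are captured automatically by the Taylor expansion in these nilpotents, with no separate case analysis needed.
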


This phenomenon was first observed in the Ph.D.\ thesis of Horja \cite{horjathesis} and was later studied by Borisov and Horja in \cite{MellinBarnes}. However, the authors were using the original version of the GKZ systems, and the map between the dual of the $K$-theory and the solution space is not necessarily an isomorphism due to the rank-jumping phenomenon at non-generic parameters (see \textit{e.g.}~\cite{MMW}). The advantage of the bbGKZ systems is that  the mirror symmetry maps from the duals of the $K$-groups to the solution spaces are always isomorphisms.

The motivation behind these works comes from Kontsevich's homological mirror symmetry, see \cite{kontsevich}, which predicts that the fundamental group of the moduli space of the complex structures on the one side naturally acts on the bounded derived category of coherent sheaves on the other side. This suggests the existence of an isotrivial family of triangulated categories over the complex moduli space. For the toric case, on the level of the Grothendieck groups, this family gives us the local system of solutions to the bbGKZ systems. The main results of \cite{MellinBarnes} and this paper hence fit within this framework. Nevertheless, a general construction of such a family on the level of triangulated categories is currently unknown, although progress has been made in the quasi-symmetric case by \v{S}penko and Van den Bergh in \cite{spenko-van den bergh} (see also \cite{spenko survey} for a survey on this).

The paper is organized as follows. In Section~\ref{sec.basics}, we review basic facts about bbGKZ systems, toric Deligne--Mumford stacks and toric wall-crossing, and we fix notation that will be used throughout this paper. In Section~\ref{sec.ac}, we compute the analytic continuation of Gamma series solutions to $\bbGKZ(C,0)$. In Section~\ref{sec.fm}, we compute the Fourier--Mukai transform associated to the toric wall-crossing $\PP_{\Sigma_-}\dashrightarrow\PP_{\Sigma_+}$ and match it with the analytic continuation computed in Section~\ref{sec.ac}. Finally, in Section~\ref{sec.dual}, we make use of the duality result in \cite{BHan} to prove the analogous result for the dual system $\bbGKZ(C^{\circ},0)$.

\section*{Acknowledgements} The author would like to thank his advisor Lev Borisov for suggesting this problem, and for consistent support, helpful discussions and useful comments throughout the preparation of this paper. The author would also like to thank the referee, whose suggestions greatly improved this paper, for the careful and thoughtful reading of the text.

\section{Better-behaved GKZ systems and toric wall-crossing}\label{sec.basics}

In this section, we collect basic facts about better-behaved GKZ systems and toric wall-crossing. The main references are \cite{BCS,BH,MellinBarnes, CIJ, GKZbook}.

\subsection{Toric Deligne--Mumford stacks and twisted sectors}

First we recall the definition of smooth toric Deligne--Mumford stacks (or toric orbifolds) following Borisov--Chen--Smith \cite{BCS}. See also \cite[Section 3]{BHconj}.

\begin{definition}\label{definition of toric stack}
	Suppose $C$, $N$, $\{v_1,\ldots,v_n\}$ and $\Sigma$ are combinatorial data defined in Section~\ref{sec.intro}. Consider the open subset $U$ of $\CC^n$ defined by
	\begin{align*}
		U=\left\{(z_1,\ldots,z_n)\in\CC^n:\ \{i:z_i=0\}\in\Sigma\right\}
	\end{align*}
	and the subgroup $G$ of $(\CC^*)^n$ defined by
	\begin{align*}
		G=\left\{(\lambda_1,\ldots,\lambda_n):\ \prod_{i=1}^n \lambda_i^{\langle m, v_i \rangle}=1,\forall m\in N^{\vee}\right\}
	\end{align*}
	The smooth toric Deligne--Mumford stack $\PP_{\Sigma}$ associated to $C$, $N$, $\{v_1,\ldots,v_n\}$ and $\Sigma$ is defined to be the stack quotient of $U$ by $G$.
\end{definition}

Recall that the \textit{twisted sectors} of a Deligne--Mumford stack are defined to be the connected components of its inertia stack. In the case of smooth toric Deligne--Mumford stacks, we have the following combinatorial description of twisted sectors.

\begin{proposition}
	There is a $1$-$1$ correspondence between the set of twisted sectors of\, a smooth toric Deligne--Mumford stack $\PP_{\Sigma}$ with the set $\Box(\Sigma)$ of \textit{twisted sectors} of\, $\Sigma$ to the set of lattice points $\gamma\in N$ which can be written as $\gamma=\sum_{j=1}^n \gamma_j v_j$, where all coefficients satisfy $\gamma_j\in[0,1)$, such that $\{j:\ \gamma_j\not=0\}$ is a cone in $\Sigma$.
\end{proposition}
\begin{proof}
	See \cite[Proposition~4.7]{BCS}.
\end{proof}

From now on we will use the symbol $\gamma$ to denote either a connected component of the inertia stack or its corresponding lattice points in $C\cap N$. Now we give an alternative characterization of twisted sectors following \cite{Iritani}. For any lattice point $c\in C\cap N$, we define
\begin{align*}
	\KK_c:&=\left\{(l_i)\in\Q^n:\ \sum_{i=1}^n l_i v_i=-c,\  \{i:l_i\not\in\Z\}\text{ is a cone in }\Sigma\right\}\\
	&=\bigcup_{\gamma\in\Box(\Sigma)}L_{c,\gamma}, 
\end{align*}
where
\begin{align*}
	L_{c,\gamma}:=\left\{(l_i)\in\Q^n:\ \sum_{i=1}^n l_i v_i=-c,\  l_i\equiv \gamma_i\mod\Z\right\}
\end{align*}
Clearly, the set $L:=L_{0,0}$ acts on $\KK_c$ by translation. The following characterization of twisted sectors can be found in \cite[Section~3.1.3]{Iritani}.

\begin{lemma}\label{associated twisted sector}
	There is an injection $\KK_c/L\hookrightarrow\Box(\Sigma)$, and the image of this map consists of twisted sectors $\gamma$ such that the set $L_{c,\gamma}$ is non-empty.
\end{lemma}
\begin{proof}
	The map $\KK_c\rightarrow\Box(\Sigma)$ defined by $(l_i)\mapsto \sum_{i=1}^n \{l_i\}v_i$ clearly factors through $\KK_c/L$. Now take a twisted sector $\gamma\in\Box(\Sigma)$; then any element in the lattice $L_{c,\gamma}$ is mapped to $\gamma$ due to the condition $l_i\equiv \gamma_i\mod\Z$. 
\end{proof}

Thus each twisted sector $\gamma$ with $L_{c,\gamma}\not=\emptyset$ is represented by elements in the lattice $\KK_c$. We call such representatives the \textit{liftings} of $\gamma$.

\subsection{Orbifold cohomology and Grothendieck K-groups}

Next we recall the combinatorial description of the orbifold cohomology ring $H_{\orb}^*(\PP_{\Sigma})$ and the Grothendieck $K$-group $K_0(\PP_{\Sigma})$ of derived category of coherent sheaves  of the toric stack $\PP_{\Sigma}$. To give the definition of orbifold cohomology, we need the following Stanley--Reisner-type presentation of the usual cohomology of twisted sectors of $\PP_{\Sigma}$ that can be found in \cite[Proposition 2.3]{BHconj}.

\begin{proposition}
	The cohomology space $H_{\gamma}$ of the twisted sector $\gamma$ is naturally isomorphic to the the quotient of the polynomial ring $\C[D_i:i\in\Star(\sigma(\gamma))\backslash\sigma(\gamma)]$ by the relations
	\begin{align*}
		\prod_{j \in J} D_j, \quad J \notin \Star(\sigma(\gamma)) \quad \text {and}\quad \sum_{i \in \Star(\sigma(\gamma)) \backslash \sigma(\gamma)} \mu\left(v_i\right) D_i, \quad \mu \in \Ann\left(v_i, i \in \sigma(\gamma)\right), 
	\end{align*}
	where $D_i$ denotes the torus-invariant divisor corresponding to the ray generated by $v_i$, $\sigma(\gamma)$ denotes  $($the set of indices of\;$)$ the minimal cone in $\Sigma$ containing the twisted sectors $\gamma$, $\Star(\sigma(\gamma))$ denotes the set of all cones in $\Sigma$ containing $\sigma(\gamma)$ as a subcone $($\textit{i.e.}, the usual star construction$)$ and $\Ann(v_i,i\in\sigma(\gamma))$ denotes the set of linear functions in $N^{\vee}$ that take zero values on the $v_i$ for $i\in\sigma(\gamma)$.
	\end{proposition}

The orbifold cohomology $H_{\orb}^*(\PP_{\Sigma})$ of $\PP_{\Sigma}$ is then defined to be the direct sum $\bigoplus_{\gamma}H_{\gamma}$ of the cohomology spaces of the twisted sectors. Note that we ignore the degree shifting given by the age of each twisted sector, since for our purpose there will be no difference.

Next we give a combinatorial description of the Grothendieck $K$-group $K_0(\PP_{\Sigma})$ of $\PP_{\Sigma}$ following \cite[Theorem 4.10]{Ktheory}. Note that there is a natural multiplication operation defined by the alternative sum of higher Tor-sheaves. Recall that the toric stack $\PP_{\Sigma}$ is defined as the stack quotient $[U/G]$ as in Definition~\ref{definition of toric stack}. It is well known that the category of coherent sheaves on $[U/G]$ is equivalent to the category of $G$-linearized coherent sheaves on $U$. For each $1$-dimensional cone $\RR_{\geq 0}v_i$, $i=1,\ldots,n$, we define a corresponding $G$-linearized invertible sheaf $\mathcal{L}_i$ whose underlying sheaf on $U$ is simply $\mathcal{O}_U$, and the isomorphism $\mathcal{O}_U\rightarrow g^*\mathcal{O}_U$ for $g=(\lambda_1,\ldots,\lambda_n)\in G$ is given by mapping $1$ to $\lambda_i$. Note that in the special case where $\PP_{\Sigma}$ is a smooth toric variety, $\mathcal{L}_i$ is exactly the same as the line bundle $\mathcal{O}(D_i)$ corresponding to the torus-invariant divisor $D_i$ corresponding to the $1$-dimensional cone $\RR_{\geq 0}v_i$.

\begin{proposition}
	The Grothendieck $K$-group $K_0(\PP_{\Sigma})$ with complex coefficients is generated by the classes $R_i:=[\mathcal{L}_i]$ for $i=1,\ldots,n$. More precisely, $K_0(\PP_{\Sigma})$ is isomorphic to the quotient of the Laurent polynomial ring $\C[R_i^{\pm 1}]$ by the ideal generated by 
	\begin{align*}
		\prod_{i=1}^n R_i^{\mu(v_i)}-1,\quad \mu\in N^{\vee}\quad\text{and}\quad \prod_{i\in I}(1-R_i),\quad I\not\in\Sigma. 
	\end{align*}
\end{proposition}

The orbifold cohomology ring and the $K$-group of $\PP_{\Sigma}$ are related by the Chern character $\ch\colon K_0(\PP_{\Sigma})\xrightarrow{\lowsim}H_{\orb}^*(\PP_{\Sigma})$. A combinatorial description can be found in \cite[Proposition 3.6]{BHconj}.

\subsection{Secondary fans and toric wall-crossing}

Before we describe the toric wall-crossing setting in this paper, we briefly recall the basic definitions and properties of secondary fans.

Let $\Sigma$ be a triangulation of the cone $C$ based on the set of vertices $\{v_1,v_2,\ldots,v_n\}$. We define the characteristic function $\varphi_{\Sigma}\colon\{v_1,v_2,\ldots,v_n\}\rightarrow\R$
by $\varphi_{\Sigma}(v_i):=\sum \Vol(\sigma)$, where $\Vol(\sigma)$ denotes the volume of $\sigma$ and the sum is taken over all simplexes $\sigma$ in $\Sigma$ that contain $v_i$ as a vertex. Note that $\varphi_{\Sigma}$ could be seen as a lattice point in $\R^n$.

\begin{definition}
	The \textit{secondary polytope} of the cone $C$ is defined to be the convex hull of $\varphi_{\Sigma}$ for all triangulations $\Sigma$ in $\R^n$, and the \textit{secondary fan} of $C$ is defined to be the normal fan of the secondary polytope.
\end{definition}

The following basic properties of secondary polytopes and fans could be found in \cite[Chapter 7]{GKZbook}.

\begin{proposition}
	The vertices of the secondary polytope of\, $C$ $($equivalently, the maximal cones of the secondary fan of\, $C)$ are in $1$-$1$ correspondence with regular triangulations of\, $C$.
\end{proposition}

\begin{remark}
	It is clear that the intersection of two maximal cones in the secondary fan is either $\{0\}$ or a cone of codimension $1$. In the latter case, the corresponding triangulations are said to be \textit{adjacent} to each other.
\end{remark}

Let $\Sigma_-$ and $\Sigma_+$ be two adjacent triangulations of the cone $C$ in the sense that the intersection of their corresponding maximal cones (which we denote by $C_{\Sigma-}$ and $C_{\Sigma_+}$, respectively) in the secondary fan is a codimension $1$ cone. Then there exists a circuit (\textit{i.e.}, a minimal linearly dependent set) $I$ defined by an integral linear relation
\begin{align*}
	h_1 v_1 +\cdots + h_n v_n =0
\end{align*}
with $I=I_+\sqcup I_-$, where $I_+=\{i:h_i>0\}$ and $I_-=\{i:h_i<0\}$. Moreover, the linear relation $h=(h_1,\ldots,h_n)$ gives the defining equation of the codimension $1$ subspace that is spanned by the intersection of the maximal cones corresponding to $\Sigma_{\pm}$ . We specify a special class of cones in the fan $\Sigma_{\pm}$. 

\begin{definition}
        A maximal cone in $\Sigma_{\pm}$ of the form $\mathcal{F}\sqcup(I\backslash i)$, where $i\in I_{\pm}$ and $\mathcal{F}\subseteq\{1,2,\ldots,n\}\backslash I$, is called an \textit{essential maximal cone} in $\Sigma_{\pm}$, and the set $\mathcal{F}$ is called the \textit{separating set} of the essential maximal cone. We denote the set of essential cones in $\Sigma_{\pm}$ by $\Sigma_{\pm}^{\es}$. If the minimal cone $\sigma(\gamma_{\pm})$ of a twisted sector $\gamma_{\pm}$ is a subcone of an essential maximal cone in $\Sigma$, then we say $\gamma_{\pm}$ is an \textit{essential twisted sector}; we denote the set of essential twisted sectors by $\Box(\Sigma_{\pm}^{\es})$.
\end{definition}

\begin{definition}
        Let $\sigma_{\pm}$ be essential maximal cones in $\Sigma_{\pm}$. We say that $\sigma_+$ and $\sigma_-$ are \textit{adjacent} if they have the same separating set $\mathcal{F}$. Equivalently, $\sigma_-$ can be obtained from $\sigma_+$ by adding the vector $i\in I_+$ which is missing in $\sigma_+$ and deleting some vector $k\in I_-$.
\end{definition}

It is proved in \cite[Section~7.1]{GKZbook} that one can obtain one triangulation of $\Sigma_{\pm}$ from another by replacing all essential cones of one triangulation with those of another. The associated toric Deligne--Mumford stacks $\PP_{\Sigma_{\pm}}$ are then related by an Atiyah flop that is a composition of a weighted blow-down and a weighted blow-up:
\begin{align*}
	\xymatrix{
	 & \PP_{\hat{\Sigma}}\ar[dr]^{f_+}\ar[dl]_{f_-} & \\
	 \PP_{\Sigma_-}\ar@{-->}[rr] & & \PP_{\Sigma_+}\rlap{.} \\
	}
\end{align*}
Here $\PP_{\hat{\Sigma}}$ is a common blow-up of $\PP_{\Sigma_{\pm}}$ defined as follows. The linear relation can be rewritten as 
\begin{align*}
	\sum_{i\in I_+}h_i v_i=-\sum_{i\in I_-}h_i v_i.
\end{align*}
We denote this vector by $\hat{v}$. We then define $\hat{\Sigma}$ to be the fan obtained by replacing all essential cones of $\Sigma_{\pm}$ by cones of the form $\mathcal{F}\cup\{\hat{v}\}\cup(I\backslash\{i_+,i_-\})$, where $i_{\pm}\in I_{\pm}$.

The behavior of twisted sectors under the wall-crossing was studied in \cite[Section~4]{MellinBarnes} and \cite[Section~6.2.3]{CIJ}.

\begin{definition}\label{adjacent twisted sectors}
        Let $\gamma_{\pm}\in\Box(\Sigma_{\pm})$ be two essential twisted sectors. We say that $\gamma_-$ is \textit{adjacent} to $\gamma_+$ if there exists a pair of essential maximal cones $\sigma_{\pm}$ in $\Sigma_{\pm}$ such that $\sigma(\gamma_+)$ and $\sigma(\gamma_-)$ are subcones of $\sigma_+$ and $\sigma_-$, respectively.
\end{definition}

\begin{lemma}\label{choice of lifting}
  Let $\Sigma_{\pm}$ be two adjacent triangulations. Then there exists a choice of the lifting $\Box(\Sigma_{\pm})\rightarrow\KK_c^{\pm}$ such that for any pair of adjacent twisted sectors $\gamma_+$ and $\gamma_-$, the liftings $\widetilde{\gamma_+}$ and $\widetilde{\gamma_-}$
  differs by a rational multiple of the defining linear relation $h=(h_1,\ldots,h_n)$ of the circuit $I$ that corresponds to the wall-crossing $\Sigma_+\rightarrow\Sigma_-$.
\end{lemma}
\begin{proof}
        The proof is similar to that of \cite[Proposition 4.4(ii)]{MellinBarnes}. We begin with an arbitrary essential twisted sector $\gamma_+\in\Box(\Sigma_{+}^{\es})$ and an arbitrary lifting $\gamma_+=\sum_{j=1}^n (\gamma_+)_j v_j$. We write $\sigma(\gamma_+)\subseteq\mathcal{F}\sqcup I\backslash i$, where $\mathcal{F}$ is a separating set and $i\in I_+$. For any $k\in I_-$, we take a rational number $q\in\Q$ such that $(\gamma_+)_k + q h_k\in\Z$. Then $((\gamma_+)_j)$ and $((\gamma_+)_j+qh_j)$ differ by a rational multiple of $h$. We denote the associated twisted sector of the latter by $\gamma_-$. It is then clear that $\sigma(\gamma_-)\subseteq\mathcal{F}\sqcup I\backslash k$; hence $\gamma_-$ is an essential twisted sector of $\Sigma_-$. Moreover, since $\sigma(\gamma_{\pm})$ share the same separating set, $\sigma_+$ and $\sigma_-$ are adjacent. 
	
	Now we have proved that by adding an appropriate rational multiple of $h$ to a lifting of an essential twisted sector in $\Sigma_+$, we get a lifting of an essential twisted sector in $\Sigma_-$. It remains to show that any essential twisted sector in $\Sigma_-$ can be obtained in this way. To see this, note that the procedure above is invertible (\textit{i.e.}, adding $-q\cdot h$ to the lifting), so if we start with some $\gamma_+$, apply the procedure above from $\Sigma_+$ to $\Sigma_-$ and back, we recover the original twisted sector. By switching the roles of $\Sigma_+$ and $\Sigma_-$, we deduce that any essential twisted sector in $\Sigma_-$ can be obtained from this procedure.
\end{proof}

\begin{remark}\label{liftings differ by h}
        In the proof above, we see that for a fixed essential twisted sector $\gamma_+$ of $\Sigma_+$ and $k\in I_-$, the lifting we constructed for the adjacent twisted sector is not unique due to the freedom of the condition $(\gamma_+)_k + q h_k\in\Z$. However, it is important to note that any two such liftings differ by an integral multiple of~$h$. In fact, whenever we have two liftings $\gamma_+ +q_1 h$ and $\gamma_+ +q_2 h$, they both define the same twisted sector of $\Sigma_-$ if and only if the fractional part of the corresponding coordinates are equal. This means that $(q_1-q_2)h$ should have integral coordinates. The primitivity of $h$ then forces $q_1-q_2$ to be an integer.
\end{remark}

\section{Analytic continuation of Gamma series}\label{sec.ac}

In this section, we compute the analytic continuation of Gamma series solutions to $\bbGKZ(C,0)$.

Following \cite{BHan}, the Gamma series solution to $\bbGKZ(C,0)$ associated to a triangulation $\Sigma$ is given by
\begin{align*}
	\Gamma_c=\bigoplus_{\gamma\in\Box(\Sigma)}\sum_{l\in L_{c,\gamma}}\prod_{j=1}^n\frac{x_j^{l_j+\frac{D_j}{2\pi {\rm i}}}}{\Gamma\left(1+l_j+\frac{D_j}{2\pi {\rm i}}\right)}, 
\end{align*}
and there exists a point $\hat{\psi}$ in the maximal cone $C_{\Sigma}$ of the secondary fan corresponding to $\Sigma$ such that the series converges absolutely on the open set
\begin{align*}
	U_{\Sigma}=\left\{\left(x_j\right)\in\C^{n}:\ \left(-\log\left|x_j\right|\right)\in \hat{\psi}+C_{\Sigma},\ \arg(\mathbf{x})\in (-\pi,\pi)^n\right\}, 
\end{align*}
which should be thought of as a neighborhood of the large radius limit point corresponding to $\Sigma$.

We introduce some additional notation that will be used later. We define $L_{c,\gamma,\sigma}$ to be the subset of $L_{c,\gamma}$ with the additional property that  the  set 
\begin{align*}
	I(l):=\left\{i:\ l_i\in\Z_{<0}\right\}\sqcup\sigma(\gamma)=\left\{i:\ l_i\not\in\Z_{\geq 0}\right\}
\end{align*}
is a subcone of the maximal cone $\sigma$. Note that an element $l\in L_{c,\gamma}$ has a non-zero contribution to the series if and only if it lies in one of the subsets $L_{c,\gamma,\sigma}$. Along the same line as the proof of \cite[Proposition 3.8]{BHan}, we can prove the following result. See also \cite[Proposition 2.8]{BH} for a similar result for the usual GKZ systems.

\begin{proposition}\label{convergence}
        For each maximal cone $\sigma$, the subseries
	\begin{align*}
		\bigoplus_{\gamma\in\Box(\Sigma)}\sum_{l\in L_{c,\gamma,\sigma}}\prod_{j=1}^n\frac{x_j^{l_j+\frac{D_j}{2\pi {\rm i}}}}{\Gamma\left(1+l_j+\frac{D_j}{2\pi {\rm i}}\right)}
	\end{align*}
	is absolutely and uniformly convergent on compacts in the region
	\begin{align*}
		U_{\sigma}=\left\{\left(x_j\right)\in\C^{n}:\ \left(-\log\left|x_j\right|\right)\in \hat{\psi}_{\sigma}+C_{\sigma},\ \arg(\mathbf{x})\in (-\pi,\pi)^n\right\}, 
	\end{align*}
	where $C_{\sigma}$ denotes the union of all maximal cones $C_{\Sigma}$ in the secondary fan that corresponds to triangulations $\Sigma$ such that $\sigma\in\Sigma$, and $\hat{\psi}_{\sigma}$ is a point in $C_{\sigma}$.
\end{proposition}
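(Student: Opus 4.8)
The plan is to adapt the argument of \cite[Proposition 3.8]{BHan}, which itself parallels \cite[Proposition 2.8]{BH}. As the series is a finite direct sum over $\operatorname{Box}(\Sigma)$, it is enough to fix a twisted sector $\gamma$ with $\sigma(\gamma)\subseteq\sigma$ (otherwise $L_{c,\gamma,\sigma}=\varnothing$) and prove absolute, locally uniform convergence of the single $H_\gamma$-valued series $\sum_{l\in L_{c,\gamma,\sigma}}\prod_{j}x_j^{l_j+D_j/(2\pi\ii)}/\Gamma(1+l_j+D_j/(2\pi\ii))$. Fix a lifting $\bar\gamma\in L_{c,\gamma}$, so $L_{c,\gamma}=\bar\gamma+L$; since $\bar\gamma_i\in\Z$ for $i\notin\sigma(\gamma)$, the constraint $I(l)\subseteq\sigma$ reduces to $l_i\in\Z_{\ge0}$ for all $i\notin\sigma$. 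As $\{v_i:i\in\sigma\}$ is a basis of $N_\R$, the map $m\mapsto(m_i)_{i\notin\sigma}$ embeds $L$ as a full-rank lattice in $\R^{n-\dim\sigma}$ identifying $L_{c,\gamma,\sigma}$ with a translate of the positive orthant, and for $j\in\sigma$ one has $l_j=(\mathrm{const})-\sum_{i\notin\sigma}a_{ij}l_i$ with $v_i=\sum_{j\in\sigma}a_{ij}v_j$.

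First I would remove the cohomology variables. The classes $D_i$, $i\in\operatorname{Star}(\sigma(\gamma))\setminus\sigma(\gamma)$, span a nilpotent ideal of $H_\gamma$, so $x_j^{D_j/(2\pi\ii)}=\exp\!\big((D_j/2\pi\ii)\log x_j\big)$, and $1/\Gamma(1+l_j+D_j/(2\pi\ii))$ --- expanded by Taylor at $1+l_j$ when $l_j\ge0$, and through $1/\Gamma(1+z)=\pi^{-1}\sin(\pi z)\Gamma(-z)$ when $l_j\in\Z_{<0}$ --- are polynomials in the $D_j$ and in $\log x_j$ with numerical coefficients. Hence it suffices to bound, for each of the finitely many resulting monomials in the $D_j$ and $\log x_j$, a scalar series $\sum_{m}|c(m)|\prod_j|x_j|^{l_j}$, where $c(m)$ gathers the Gamma-values together with the Taylor/reflection coefficients (which grow at most polynomially in $m$) and where the $(\log x_j)^{a_j}$-factors, being bounded by polynomials in $(-\log|x_j|)_j$, are harmless.

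The heart of the matter is the Stirling estimate of $|c(m)|\prod_j|x_j|^{l_j}$. For $i\notin\sigma$ the factor is comparable to $|x_i|^{l_i}/l_i!\sim(e|x_i|/l_i)^{l_i}l_i^{-1/2}$, which decays super-exponentially as $l_i\to\infty$ regardless of $|x_i|$; likewise for $j\in\sigma$ with $l_j\ge0$. For $j\in\sigma$ with $l_j<0$ the reflection formula yields a factor of size $\sim|x_j|^{l_j}(-l_j-1)!\sim\big(|l_j|/(e|x_j|)\big)^{|l_j|}|l_j|^{-1/2}$, which grows super-exponentially --- but $l_j\to-\infty$ forces $\sum_{i\notin\sigma}a_{ij}l_i\to+\infty$, and since every $v_i$ has $\deg v_i=1$ one has $\sum_{j\in\sigma}a_{ij}=1$; comparing the super-linear parts $-l_i\log l_i$ of the decaying factors with the super-linear parts $\le\big(\sum_{j\in\sigma,\,a_{ij}>0}a_{ij}\big)l_i\log l_i\le l_i\log l_i$ of the growing ones, all $m\log m$ terms cancel. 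Thus $\log\!\big(|c(m)|\prod_j|x_j|^{l_j}\big)$ is an affine-linear functional of $m\in L$ plus an $O(\log|m|)$ error, the linear part being $m\mapsto\sum_j m_j\log|x_j|$ up to a Stirling-type linear correction with constant coefficients. Taking $C_\sigma$ to be a suitable full-dimensional rational cone of directions $(-\log|x_j|)_j$ making this functional $\le-\delta|m|$ on the orthant $\{m:m_i\ge0,\ i\notin\sigma\}$ for some $\delta>0$, and $\hat\psi_\sigma$ a point pushed far enough into the associated region to absorb the constant shift and the logarithmic error, the series is dominated term by term by a convergent series of geometric type $\sum_m P(m)\prod_{i\notin\sigma}r_i^{m_i}$ with $0<r_i<1$, uniformly on compact subsets of $U_\sigma$; this gives the claim. (Note that $C_\Sigma\subseteq C_\sigma$, consistently with the convergence of $\Gamma_c$ on $U_\Sigma$.)

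The step I expect to be the main obstacle is this cancellation of the $m\log m$ terms: one must organize the Stirling estimates so that the degree-one (Gorenstein) condition $\sum_{j\in\sigma}a_{ij}=1$ genuinely neutralizes the super-exponential growth coming from the poles of $\Gamma$ along $l_j\in\Z_{<0}$, uniformly in $m$ and over the finitely many twisted sectors and $D_j$-monomials, and then check that the residual region $C_\sigma$ is full-dimensional so that $U_\sigma\neq\varnothing$. This is precisely where one reuses the mechanism of \cite[\S 3]{BHan} and \cite[\S 2]{BH} rather than redeveloping it.
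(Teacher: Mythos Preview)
Your proposal is correct and coincides with the paper's approach: the paper gives no independent argument for this proposition, stating only that it follows ``along the same line as the proof of \cite[Proposition 3.8]{BHan}'' (with \cite[Proposition 2.8]{BH} as a parallel), and your plan is exactly to adapt those proofs. In fact you provide considerably more detail than the paper itself --- your outline of the nilpotent reduction, the Stirling estimate, and the cancellation of the super-linear terms via the degree-one condition is precisely the mechanism in those references, and your explicit deferral to \cite{BHan,BH} for the delicate bookkeeping of that cancellation matches what the paper does wholesale.
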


\begin{remark}\label{notation1}
	More generally, for any subset of maximal cones $J$ of $\Sigma$, the subseries taken over the union of all $L_{c,\gamma,\sigma}$ for $\sigma\in J$ converges absolutely and uniformly on compacts in  $U_J:=\cap_{\sigma\in J}U_{\sigma}$. In particular, the open set $U_{\Sigma}$ is a subset of the intersection of the $U_{\sigma}$ for all $\sigma\in\Sigma$.
\end{remark}

Furthermore, we define $L_{c,\gamma}^{\es}$ to be the union of $L_{c,\gamma,\sigma}$ for all $\sigma\in\Sigma^{\es}$. Note that $L_{c,\gamma}^{\es}$ is non-empty only if $\gamma\in\Box(\Sigma^{\es})$. We define the \textit{essential part} $\Gamma_c^{\es}=\oplus_{\gamma}\Gamma_{c,\gamma}^{\es}$ of the Gamma series $\Gamma_c$ to be the subseries that consists of terms corresponding to $l\in L_{c,\gamma}^{\es}$, namely
\begin{align*}
	\Gamma_c^{\es}:=\bigoplus_{\gamma}\sum_{l\in L_{c,\gamma}^{\es}}\prod_{j=1}^n\frac{x_j^{l_j+\frac{D_j}{2\pi {\rm i}}}}{\Gamma\left(1+l_j+\frac{D_j}{2\pi {\rm i}}\right)}, 
\end{align*}
and the \textit{non-essential part} to be $\Gamma_c-\Gamma_c^{\es}$.

Henceforth, we will add superscripts $\pm$ to the notation defined above to distinguish between Gamma series associated to different triangulations $\Sigma_{\pm}$.

The main goal of this section is to compute the analytic continuation of the Gamma series solution $\Gamma^{+}$ to $\bbGKZ(C,0)$ defined on $U_{\Sigma_+}$ along the following path (see Figure~\ref{path}) to $U_{\Sigma_-}$:
\begin{itemize}
	\item The start and end points $x_{\pm}\in U_{\Sigma_{\pm}}$ should be chosen so that both of them lie in the open set $U_{\Sigma_+\cap\Sigma_-}$\footnote{Here $\Sigma_+\cap\Sigma_-$ denotes the set of common maximal cones of $\Sigma_{\pm}$; see Remark~\ref{notation1}.} and satisfy $\arg(x_+)_j=\arg(x_-)_j$ and $-\log|(x_+)_j|+\log|(x_-)_j|=Ah_j$ for any $j$ and some constant $A>0$.\footnote{This condition is equivalent to saying that the line connecting $\log|(x_+)_j|$ and $\log|(x_-)_j|$ is perpendicular to the wall that separates $\Sigma_+$ and $\Sigma_-$ in the secondary fan.}
	\item The path $x(u)$, $0\leq u\leq 1$,  from $x_+$ to $x_-$ is chosen so that for any $u\in[0,1]$, 
	\begin{align*}
		\arg(x(u)_j)=\arg(x_+)_j=\arg(x_-)_j,\\
		\log\left|x(u)_j\right|=(1-u)\log\left|(x_+)_j\right|+u\log\left|(x_-)_j\right|.
	\end{align*}
	Moreover, we require the argument of the  auxiliary variable
	\begin{align*}
		y:=e^{\ii\pi\sum_{j\in I_-}h_j}\prod_{j=1}^n x_j^{h_j}
	\end{align*}
	to be  restricted in the interval $(-2\pi,0)$ along the path. The existence of this path is guaranteed by the choice of $x_{\pm}$.
\end{itemize}

\begin{figure*}[!htbp]
	\centering
	\includegraphics[width=0.6\textwidth]{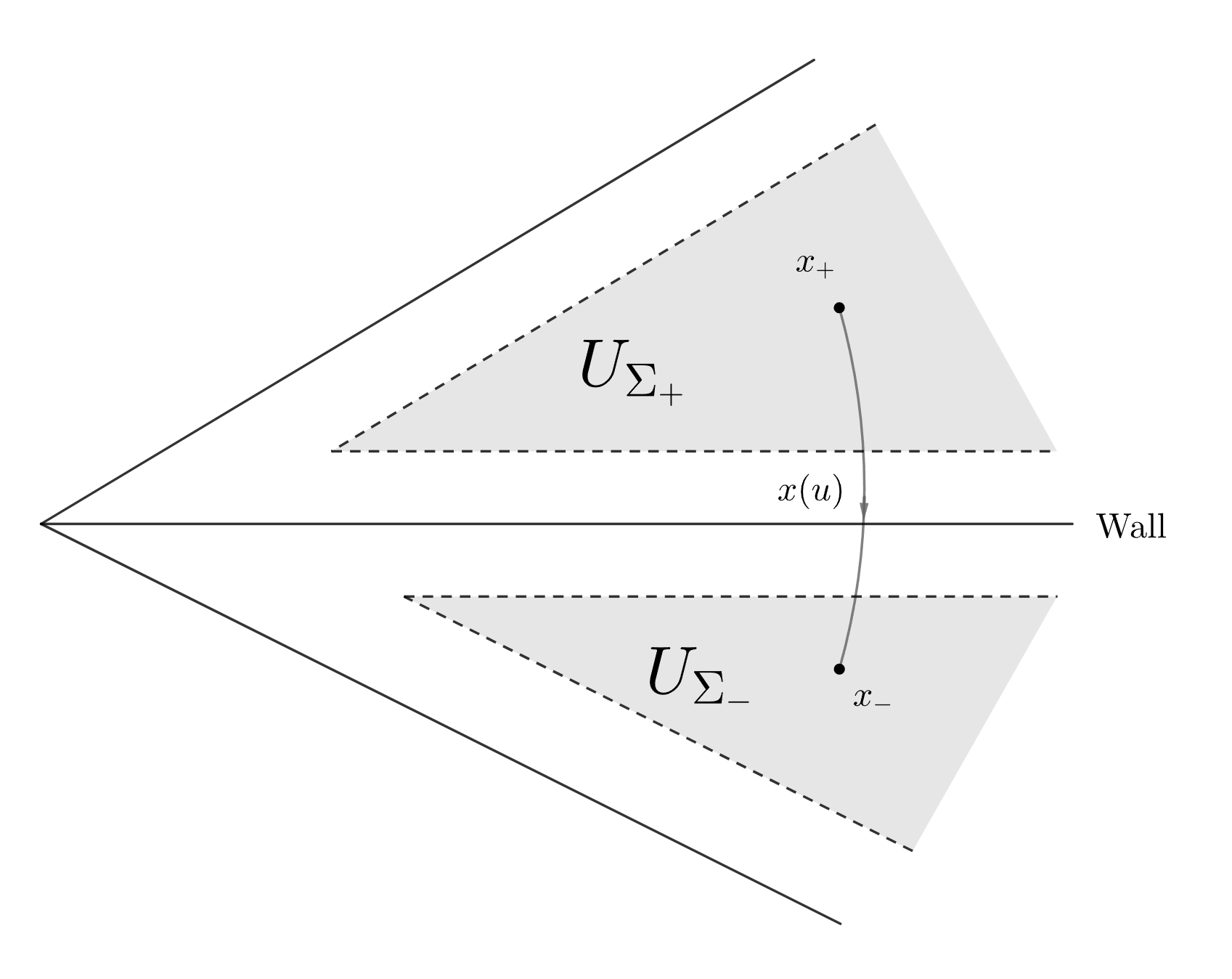}
	\caption{Path of analytic continuation}
	\label{path}
\end{figure*}

\begin{remark}
	The restriction on the argument of the variable $y$ is imposed to avoid introducing monodromy during the process of analytic continuation.
\end{remark}

 The main idea comes from \cite{MellinBarnes}, where the technique of Mellin--Barnes integrals is used to compute the analytic continuation for the usual GKZ systems. The main difference is that while they worked with $K$-theory-valued solutions, we work with the orbifold cohomology-valued solutions which makes the computation simpler, inspired by the approach of \cite{CIJ}.
 
\begin{remark}
	Let us make a remark here that throughout the remainder of this section, we think of the symbols $D_i$ as \textit{generic complex numbers}. The reason will be clear once we arrive at the proof of Theorem~\ref{conj for usual system}.
\end{remark}

In the following, we study the analytic continuation of the essential part and the non-essential part separately. The latter case is easier.

\begin{proposition}\label{non-eseential AC}
	The analytic continuation of\, $\Gamma_c^{+}-\Gamma_c^{+,\es}$ along the path $\lambda$ is  equal to $\Gamma_c^{-}-\Gamma_c^{-,\es}$.
\end{proposition}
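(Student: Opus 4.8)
The plan is to reduce the statement to a term-by-term identification after showing that the non-essential part of $\Gamma^+$ is, in fact, already convergent on a region containing $U_{\Sigma_-}$, so that no genuine analytic continuation is needed. First I would observe that a term indexed by $l \in L_{c,\gamma}$ is non-essential precisely when its support cone $I(l)$ is not an essential cone of $\Sigma_+$; since the essential cones are exactly those whose underlying triangulated regions change under the wall-crossing, every non-essential $l$ has $I(l) \subseteq \sigma$ for some maximal cone $\sigma$ that is common to both $\Sigma_+$ and $\Sigma_-$. Hence each non-essential term of $\Gamma^+_c$ lies in $L_{c,\gamma,\sigma}$ for a shared maximal cone $\sigma$, and by Proposition \ref{convergence} together with the Remark following it, the non-essential subseries converges absolutely and uniformly on compacts of $U_J$, where $J$ is the set of maximal cones common to $\Sigma_+$ and $\Sigma_-$.

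Next I would check that the path $x(u)$ stays inside this common region $U_J$ for all $u \in [0,1]$. The endpoints $x_{\pm}$ were chosen to lie in $U_{\Sigma_+ \cap \Sigma_-}$, and since $U_{\Sigma_\pm} \subseteq \bigcap_{\sigma \in \Sigma_\pm} U_\sigma$, both endpoints lie in $U_\sigma$ for every common maximal cone $\sigma$; because each $U_\sigma$ is (after taking $-\log|\cdot|$) a translated cone, hence convex, and the path interpolates linearly in $\log|x_j|$ while keeping arguments fixed, the whole path remains in $\bigcap_{\sigma \in J} U_\sigma = U_J$. Therefore the non-essential subseries of $\Gamma^+_c$ represents a single-valued holomorphic function on an open set containing the path, and its analytic continuation along $\lambda$ is literally itself — the same convergent power series.

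It then remains to identify this series with $\Gamma^-_c - \Gamma^{-,\mathrm{es}}_c$. This is a combinatorial matching: I would show that the index set $\bigsqcup_{\gamma \in \operatorname{Box}(\Sigma_+)} \big(L^+_{c,\gamma} \setminus L^{+,\mathrm{es}}_{c,\gamma}\big)$ coincides with $\bigsqcup_{\gamma \in \operatorname{Box}(\Sigma_-)} \big(L^-_{c,\gamma} \setminus L^{-,\mathrm{es}}_{c,\gamma}\big)$, compatibly with the cohomology-valued coefficients. The point is that for a common maximal cone $\sigma \in J$, the sets $L_{c,\gamma,\sigma}$, the twisted sectors $\gamma$ with $\sigma(\gamma) \subseteq \sigma$, and the cohomology spaces $H_\gamma$ all depend only on $\sigma$ and not on whether we regard it as sitting in $\Sigma_+$ or $\Sigma_-$; a non-essential $l$ contributes via exactly the star of $\sigma(\gamma)$ inside that common $\sigma$, and the relations presenting $H_\gamma$ from Proposition \ref{convergence}'s ambient Stanley–Reisner description are unchanged. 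Summing the contributions over all common maximal cones and all $\gamma \in \operatorname{Box}(\Sigma_+^{\mathrm{non-es}}) = \operatorname{Box}(\Sigma_-^{\mathrm{non-es}})$ gives the claimed equality.

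The main obstacle I anticipate is the bookkeeping in the last step: one must be careful that a single non-essential $l$ may lie in several $L_{c,\gamma,\sigma}$ for different common maximal cones $\sigma$, so the "union over $\sigma \in J$" description double-counts and one should instead argue directly that $l$ is non-essential for $\Sigma_+$ iff it is non-essential for $\Sigma_-$, using that $I(l)$ — being contained in a non-essential maximal cone — is a face of the common part of the two triangulations. Once that equivalence of index sets is in hand, the coefficient matching is immediate because the Gamma-series summand $\prod_j x_j^{l_j + D_j/2\pi\mathrm{i}} / \Gamma(1 + l_j + D_j/2\pi\mathrm{i})$ is manifestly independent of the triangulation, and the ambient cohomology relations agree; so the bulk of the work is purely combinatorial rather than analytic.
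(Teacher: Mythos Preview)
Your proposal is correct and follows essentially the same approach as the paper's proof: identify the non-essential terms as those indexed by cones common to $\Sigma_+$ and $\Sigma_-$, invoke Proposition~\ref{convergence} (and its Remark) for convergence on $U_{\Sigma_+\cap\Sigma_-}$, and use convexity of the cone regions to keep the path inside that domain. The paper compresses all of this into three sentences and leaves the term-by-term identification of the two non-essential series (and the matching of the ambient cohomology components) implicit; you spell these out, and your closing paragraph about arguing directly that ``$l$ is non-essential for $\Sigma_+$ iff non-essential for $\Sigma_-$'' is exactly the right way to handle the overcounting issue that the union-over-$\sigma$ description would introduce.
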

\begin{proof}
	By definition, each single term $\prod_{j=1}^n\frac{x_j^{l_j+\frac{D_j}{2\pi {\rm i}}}}{\Gamma(1+l_j+\frac{D_j}{2\pi {\rm i}})}$ in the non-essential part $\Gamma_c^{\pm}-\Gamma_c^{\pm,\es}$ corresponds to some $l\in \bigcup_{\sigma\in\Sigma_+\cap\Sigma_-}L_{c,\gamma,\sigma}$. According to the choice of $x_{\pm}$, both of them lie in $U_{\Sigma_+\cap\Sigma_-}$; the convexity then ensures that the whole path $x(u)$ is contained in this open set. Therefore, by Proposition~\ref{convergence}, the non-essential parts $\Gamma_c^{\pm}-\Gamma_c^{\pm,\es}$ are analytic on a open set which contains the analytic continuation path. This finishes the proof.
\end{proof}

The rest of this section will be devoted to the continuation of the essential part $\Gamma_c^{+,\es}$. From now on, we fix a twisted sector $\gamma\in\Box(\Sigma_+)$ such that $L_{c,\gamma}$ is non-empty\footnote{If the set $L_{c,\gamma}$ is empty, then the corresponding component $\Gamma_{c,\gamma}$ is equal to zero, thus there is no need for analytic continuation.} and look at the corresponding component of the Gamma series.

Recall from Section 2 that $h$ is the primitive integral linear relation associated to the wall-crossing from $\Sigma_+$ to $\Sigma_-$. It is clear from the definition that $h$ acts on the lattice $L_{c,\gamma}^{+}$ by translation. It is also clear that if $l\in L_{c,\gamma}^{\es,+}$, then for any $m\geq 0$, the translation $l+mh$ also lies in $L_{c,\gamma}^{\es,+}$ (see \cite[Proposition 4.7]{MellinBarnes} for a similar result for usual GKZ systems). Hence the  subset of $L_{c,\gamma}^{\es,+}$
\begin{align*}
	\tilde{L}_{c,\gamma}^{\es,+}:=\left\{l\in L_{c,\gamma}^{\es,+}:\ l-h\not\in L_{c,\gamma}^{\es,+}\right\}
\end{align*}
is well defined, and $L_{c,\gamma}^{\es,+}=\tilde{L}_{c,\gamma}^{\es,+}+\Z_{\geq 0}h$. We can define $\tilde{L}_{c,\gamma^{\prime}}^{\es,-}$ in the same way with an appropriate change of signs.

With this notation, the essential part $\Gamma_{c,\gamma}^{+,\es}$ is equal to
\begin{align*}
	\sum_{l\in L_{c,\gamma}^{\es,+}}\prod_{j=1}^n\frac{x_j^{l_j+\frac{D_j}{2\pi {\rm i}}}}{\Gamma\left(1+l_j+\frac{D_j}{2\pi {\rm i}}\right)}=\sum_{l^{\prime}\in \tilde{L}_{c,\gamma}^{\es,+}}\sum_{m=0}^{\infty}\prod_{j=1}^n\frac{x_j^{l_j^{\prime}+mh_j+\frac{D_j}{2\pi {\rm i}}}}{\Gamma\left(1+l_j^{\prime}+mh_j+\frac{D_j}{2\pi {\rm i}}\right)}.
\end{align*}
By applying the Euler identity $\Gamma(z)\Gamma(1-z)=\frac{\pi}{\sin(\pi z)}$, we can rewrite the product as
\begin{align*}
	\prod_{j=1}^n x_j^{l_j^{\prime}+\frac{D_j}{2\pi {\rm i}}}\cdot \frac{\prod_{j\in I_-}\frac{\sin\left(\pi\left(-l_j^{\prime}-\frac{D_j}{2\pi {\rm i}}\right)\right)}{\pi}\Gamma\left(-l_j^{\prime}-mh_j-\frac{D_j}{2\pi {\rm i}}\right)}{\prod_{j\not\in I_-}\Gamma\left(1+l_j^{\prime}+mh_j+\frac{D_j}{2\pi {\rm i}}\right)}\cdot\left((-1)^{\sum_{j\in I_-}h_j}\prod_{i=1}^n x_j^{h_j}\right)^m.
\end{align*}

Now we consider
\begin{align*}
  I(s)=-\prod_{j=1}^n x_j^{l_j^{\prime}+\frac{D_j}{2\pi {\rm i}}}\frac{\prod_{j\in I_-}\frac{\sin\left(\pi\left(-l_j^{\prime}-\frac{D_j}{2\pi {\rm i}}\right)\right)}{\pi}\Gamma\left(-l_j^{\prime}-sh_j-\frac{D_j}{2\pi {\rm i}}\right)}{\prod_{j\not\in I_-}\Gamma\left(1+l_j^{\prime}+sh_j+\frac{D_j}{2\pi {\rm i}}\right)}\Gamma(-s)\Gamma(1+s)\left(e^{{\rm i}\pi}y\right)^s, 
\end{align*}
where $y=e^{\ii\pi\sum_{j\in I_-}h_j}\prod_{j=1}^n x_j^{h_j}$. There are two types of poles of $I(s)$:
\begin{itemize}
	\item integers $s=m\in\Z$, which come from the factor $\Gamma(-s)\Gamma(1+s)$, 
	\item $s=p_{k,w}:=-\frac{1}{h_k}\left(l_k^{\prime}+\frac{D_k}{2\pi{\rm i}}\right)+\frac{w}{h_k}$, $k\in I_-$, $w\in\Z_{\geq 0}$, which come from the factor $\prod_{j\in I_-}\Gamma(-l_j^{\prime}-sh_j-\frac{D_j}{2\pi {\rm i}})$.
\end{itemize}

 The reason why we use this specific form of $I(s)$ is that the residue of $I(s)$ at integers $s=m\in\Z$ is exactly
\begin{align*}
	\res_{s=m\in\Z}I(s)=\prod_{j=1}^n\frac{x_j^{l_j^{\prime}+mh_j+\frac{D_j}{2\pi {\rm i}}}}{\Gamma\left(1+l_j^{\prime}+mh_j+\frac{D_j}{2\pi {\rm i}}\right)}, 
\end{align*}
that is, what we have in the original Gamma series. Also note that the residues at non-positive integers are in fact zero, which follows directly from the definitions of $l^{\prime}$ and $\tilde{L}_{c,\gamma}^{\es,+}$.

The next step is to compute the residues of $I(s)$ at $p_{k,w}$. An application of the Euler identity together with an elementary computation show that
\begin{align*}
	I(s)&=-\frac{\pi e^{{\rm i}\pi s}}{\sin(-\pi s)}\prod_{j\in I_-}\frac{\sin\left(\pi\left(-l_j^{\prime}-\frac{D_j}{2\pi {\rm i}}\right)\right)}{\sin\left(\pi\left(-l_j^{\prime}-sh_j-\frac{D_j}{2\pi {\rm i}}\right)\right)}e^{{\rm i}\pi\left(\sum_{j\in I_-}h_j\right)s}\prod_{j=1}^n\frac{x_j^{l_j^{\prime}+sh_j+\frac{D_j}{2\pi{\rm i}}}}{\Gamma\left(1+l_j^{\prime}+sh_j+\frac{D_j}{2\pi {\rm i}}\right)}\\
	&=\frac{2\pi{\rm i}}{1-e^{-2{\rm i}\pi s}}\prod_{j\in I_-}\frac{1-e^{-2{\rm i}\pi \left(l_j^{\prime}+\frac{D_j}{2\pi{\rm i}}\right)}}{1-e^{-2{\rm i}\pi \left(l_j^{\prime}+sh_j+\frac{D_j}{2\pi{\rm i}}\right)}}\prod_{j=1}^n\frac{x_j^{l_j^{\prime}+sh_j+\frac{D_j}{2\pi{\rm i}}}}{\Gamma\left(1+l_j^{\prime}+sh_j+\frac{D_j}{2\pi {\rm i}}\right)}.
\end{align*}
It suffices to look at the factor $1/(1-e^{-2{\rm i}\pi(l_k^{\prime}+sh_k+\frac{D_k}{2\pi{\rm i}})})$, whose residue at $p_{k,w}$ is equal to $\frac{1}{2\pi{\rm i}h_k}$. Putting all these together, we get
\begin{align*}
	\res_{s=p_{k,w}}I(s)
	=&\frac{1-e^{-2{\rm i}\pi \left(l_k^{\prime}+\frac{D_k}{2\pi{\rm i}}\right)}}{h_k\left(1-e^{-2{\rm i}\pi p_{k,w}}\right)}\prod_{\substack{ j\in I_- \\ j\not=k }}\frac{1-e^{-2{\rm i}\pi \left(l_j^{\prime}+\frac{D_j}{2\pi{\rm i}}\right)}}{1-e^{-2{\rm i}\pi \left(l_j^{\prime}+p_{k,w}h_j+\frac{D_j}{2\pi{\rm i}}\right)}}
	\cdot\prod_{j=1}^n\frac{x_j^{l_j^{\prime}+p_{k,w}h_j+\frac{D_j}{2\pi{\rm i}}}}{\Gamma\left(1+l_j^{\prime}+p_{k,w}h_j+\frac{D_j}{2\pi {\rm i}}\right)}.
\end{align*}
Now we introduce new notation. We write $w=w_0\cdot(-h_k)+r$, where $w_0,r\in\Z$ and $0\leq r<-h_k$, and define
\begin{align*}
	l^{\prime\prime}:=l^{\prime}+\frac{l_k^{\prime}-r}{-h_k}h.
\end{align*}
Then we have
\begin{align*}
	1+l_j^{\prime}+p_{k,w}h_j+\frac{D_j}{2\pi{\rm i}}&=1+\left(l_j^{\prime}-\frac{l_k^{\prime}}{h_k}h_j+w\frac{h_j}{h_k}\right)+\left(\frac{D_j-\frac{h_j}{h_k}D_k}{2\pi{\rm i}}\right)\\
	&=1+\left(l_{j}^{\prime\prime}-w_0h_j\right)+\left(\frac{D_j-\frac{h_j}{h_k}D_k}{2\pi{\rm i}}\right).
\end{align*}

We denote the associated twisted sector by $\gamma^{(k,r)}\in\Box(\Sigma_-^{\es})$. The key observation is $l^{\prime\prime}\in \tilde{L}_{c,\gamma^{(k,r)}}^{\es,-}$.

\begin{lemma}
	Given $l^{\prime}\in \tilde{L}_{c,\gamma}^{\es,+}$, $k\in I_-$ and $0\leq r<-h_k$, there is a uniquely determined essential twisted sector $\gamma^{(k,r)}\in\Box(\Sigma_-)^{\es}$ such that $l^{\prime\prime}\in \tilde{L}_{c,\gamma^{(k,r)}}^{\es,-}$.
\end{lemma}
\begin{proof}
	The twisted sector $\gamma^{(k,r)}$ is defined as the associated twisted sector of $l^{\prime\prime}$ in the sense of Lemma~\ref{associated twisted sector}; \textit{i.e.}, $\gamma^{(k,r)}:=\sum_{j=1}^n \{l_j^{\prime\prime}\}v_j$.
	
	First we show that $\gamma^{(k,r)}$ is an essential twisted sector in $\Sigma_-$. This is equivalent to showing that $\{j:\ l_j^{\prime\prime}\not\in\Z\}$ is a subcone of an essential cone in $\Sigma_-$. Since $l^{\prime}\in \tilde{L}_{c,\gamma}^{\es,+}$, we can write
	\begin{align*}
		I(l^{\prime})=\left\{j:\ l_j^{\prime}\not\in\Z_{\geq 0}\right\}\subseteq \mathcal{F}\sqcup I\backslash\{i\}\in\Sigma_+^{\es}
	\end{align*}
	for some separated set $\mathcal{F}$ and $i\in I_+$. Take $j$ such that $l_j^{\prime\prime}\not\in\Z$. If $j\not\in I$, then $h_j=0$ and $l_j^{\prime\prime}=l_j^{\prime}$, so $j\in I(l^{\prime})$, and hence $j\in\mathcal{F}$. On the other hand, if $j\in I$, then $j\not=k$ because by the definition of $l^{\prime\prime}$, we have $l_k^{\prime\prime}=r\in\Z_{\geq 0}$, so $k\not\in I(l^{\prime\prime})$. Now we conclude that $I(l^{\prime\prime})\subseteq\mathcal{F}\sqcup I\backslash\{k\}$, which is an essential cone in $\Sigma_-$ because $k\in I_-$. This also shows that $l^{\prime\prime}$ lies in $L_{c,\gamma^{(k,r)}}^{\es,-}$.
	
	Next we show that $l^{\prime\prime}\in \tilde{L}_{c,\gamma^{(k,r)}}^{\es,-}$. It suffices to show that $l^{\prime\prime}+h\not\in L_{c,\gamma^{(k,r)}}^{\es,-}$. This follows from the construction of $l^{\prime\prime}$. Note that $l^{\prime\prime}$ is chosen such that $l_k^{\prime\prime}=r\in\Z_{\geq 0}$ while $l_k^{\prime\prime}+h_k=r+h_k\in\Z_{< 0}$. This implies that $I(l^{\prime\prime})\subseteq\mathcal{F}\sqcup I\backslash\{k\}$ while $I(l^{\prime\prime}+h)\not\subseteq\mathcal{F}\sqcup I\backslash\{k\}$. Therefore, $l^{\prime\prime}+h\not\in L_{c,\gamma^{(k,r)}}^{\es,-}$ and the proof is completed.
\end{proof}

The first two terms in the residue can be written as
\begin{align*}
	\frac{1-e^{-2{\rm i}\pi \left(l_k^{\prime}+\frac{D_k}{2\pi{\rm i}}\right)}}{h_k\left(1-e^{-2{\rm i}\pi \left(\frac{l_k^{\prime}-l_k^{\prime\prime}}{-h_k}-\frac{D_k}{2\pi{\rm i}h_k}\right)}\right)}\prod_{\substack{ j\in I_- \\ j\not=k }}\frac{1-e^{-2{\rm i}\pi \left(l_j^{\prime}+\frac{D_j}{2\pi{\rm i}}\right)}}{1-e^{-2{\rm i}\pi \left(l_j^{\prime\prime}+\frac{D_j-\frac{h_j}{h_k}D_k}{2\pi{\rm i}}\right)}}.
\end{align*}
We claim that this factor only depends on the twisted sectors $\gamma$ and $\gamma^{(k,r)}$. To see this, recall our choice of the lifting $\Box(\Sigma_{\pm})\rightarrow\KK_c$ made in Remark~\ref{choice of lifting}. It is then clear that $(l^{\prime}-l^{\prime\prime})-(\gamma-\gamma^{(k,r)})$ is also a rational multiple of $h$. However, the left side has integral coordinates, which forces the right side to be an integer multiple of $h$ due to the primitivity. This implies that the $\supth{k}$ coordinate $(l^{\prime}_k-l^{\prime\prime}_k)-(\gamma_k-\gamma^{(k,r)}_k)$ is an integer multiple of $h_k$, which means that 
\begin{align*}
	1-e^{-2{\rm i}\pi \left(\frac{l_k^{\prime}-l_k^{\prime\prime}}{-h_k}-\frac{D_k}{2\pi{\rm i}h_k}\right)}=1-e^{-2{\rm i}\pi \left(\frac{\gamma_k-\gamma^{(k,r)}_k}{-h_k}-\frac{D_k}{2\pi{\rm i}h_k}\right)}.
\end{align*}
Together with the facts that $l^{\prime}_j\equiv\gamma_j$ and $l^{\prime\prime}_j\equiv\gamma^{(k,r)}_j$ modulo $\Z$, this implies that the original factor is  equal to
\begin{align*}
	C_{\gamma^{(k,r)}}:=\frac{1-e^{-2{\rm i}\pi \left(\gamma_k+\frac{D_k}{2\pi{\rm i}}\right)}}{h_k\left(1-e^{-2{\rm i}\pi \left(\frac{\gamma_k-\gamma^{(k,r)}_k}{-h_k}-\frac{D_k}{2\pi{\rm i}h_k}\right)}\right)}\prod_{\substack{ j\in I_- \\ j\not=k }}\frac{1-e^{-2{\rm i}\pi \left(\gamma_j+\frac{D_j}{2\pi{\rm i}}\right)}}{1-e^{-2{\rm i}\pi \left(\gamma^{(k,r)}_j+\frac{D_j-\frac{h_j}{h_k}D_k}{2\pi{\rm i}}\right)}}; 
\end{align*}
hence the residue is
\begin{align*}
	\res_{s=p_{k,w}}I(s)=C_{\gamma^{(k,r)}}\cdot\prod_{j=1}^n\frac{x_j^{\left(l_j^{\prime\prime}-w_0 h_j\right)+\left(\frac{D_j-\frac{h_j}{h_k}D_k}{2\pi{\rm i}}\right)}}{\Gamma\left(1+\left(l_j^{\prime\prime}-w_0h_j\right)+\frac{D_j-\frac{h_j}{h_k}D_k}{2\pi{\rm i}}\right)}.
\end{align*}

Finally, we use the techniques of Mellin--Barnes integrals to finish the computation. First of all, we fix an $l^{\prime}\in \tilde{L}_{c,\gamma}^{\es,+}$ and do the analytic continuation to the corresponding subseries 
\begin{align*}
	\sum_{m=0}^{\infty}\prod_{j=1}^n\frac{x_j^{l_j^{\prime}+mh_j+\frac{D_j}{2\pi {\rm i}}}}{\Gamma\left(1+l_j^{\prime}+mh_j+\frac{D_j}{2\pi {\rm i}}\right)}.
\end{align*}

We consider the contour integral
\begin{align*}
	\frac{1}{2\pi{\rm i}}\int_{C} I(s)ds=\frac{1}{2\pi{\rm i}}\int_{a-{\rm i}\infty}^{a+{\rm i}\infty} I(s)ds. 
\end{align*}
Here the contour $C$ is parallel to the imaginary axis, and the real part $a$ of $C$ is a negative number satisfying $\epsilon<|a|<1$ for some $\epsilon>0$ which avoids any pole of the integrand.

Now by \cite[Lemma A.6]{MellinBarnes} (see also the proof of \cite[Theorem 4.10]{MellinBarnes}), the sum of residues of $I(s)$ at the poles on the right side of the contour $C$, 
\begin{align}\label{eq1}
	\sum_{m=0}^{\infty}\prod_{j=1}^n\frac{x_j^{l_j^{\prime}+mh_j+\frac{D_j}{2\pi {\rm i}}}}{\Gamma\left(1+l_j^{\prime}+mh_j+\frac{D_j}{2\pi {\rm i}}\right)}+\sum_{p_{k,w}\text{ on the right side of }C}\res_{p_{k,w}}I(s), 
\end{align}
is analytically continued to
\begin{align}\label{eq2}
	-\sum_{p_{k,w}\text{ on the left side of }C}\res_{p_{k,w}}I(s), 
\end{align}
that is, the negative of the sum of residues of $I(s)$ at poles on the left side of $C$. Note that for a fixed $l^{\prime}\in \tilde{L}_{c,\gamma}^{\es,+}$, the real part of the poles $p_{k,w}$ is bounded above; therefore, the second sum in~\eqref{eq1} is finite. Therefore, we can add the negative of it to both~\eqref{eq1} and~\eqref{eq2}, and deduce that
\begin{align*}
	\sum_{m=0}^{\infty}\prod_{j=1}^n\frac{x_j^{l_j^{\prime}+mh_j+\frac{D_j}{2\pi {\rm i}}}}{\Gamma\left(1+l_j^{\prime}+mh_j+\frac{D_j}{2\pi {\rm i}}\right)}
\end{align*}
is analytically continued to
\begin{align*}
	-\sum_{k\in I_-}\sum_{0\leq r<-h_k}C_{\gamma^{(k,r)}}\sum_{w_0=0}^{\infty}\prod_{j=1}^n\frac{x_j^{\left(l_j^{\prime\prime}-w_0 h_j\right)+\left(\frac{D_j-\frac{h_j}{h_k}D_k}{2\pi{\rm i}}\right)}}{\Gamma\left(1+\left(l_j^{\prime\prime}-w_0h_j\right)+\frac{D_j-\frac{h_j}{h_k}D_k}{2\pi{\rm i}}\right)}. 
\end{align*}

To proceed with the analytic continuation, we need some analytic results and estimates. The corresponding results in the setting of the usual GKZ systems can be found in \cite{MellinBarnes}. Indeed, the results in this paper
could be proved word for word following the argument therein.

We denote the intersection of the cone $C_{\Sigma_+}$ and $C_{\Sigma_-}$ in the secondary fan (\textit{i.e.}, the wall defined by the linear relation $h$) by $\widetilde{C}$.

\begin{lemma}\label{estimate1}
	For any $k,A>0$, there exists a $\widetilde{c}$ in the interior of\, $\widetilde{C}$ such that for any $l^{\prime}\in \widetilde{L}_{c,\gamma}^{\es,+}$,  we have
	\begin{align*}
		\sum_{j=1}^n l_j^{\prime}u_j\geq k\Vert l^{\prime}\Vert
	\end{align*}
	for any $u \in \widetilde{C}+\widetilde{c}+a$ and any $a\in\R^n$ with $\Vert a \Vert \leq A$.
\end{lemma}
\begin{proof}
	The proof is nearly the same as that of \cite[Lemma 4.11]{MellinBarnes}; the only difference is that the $\widetilde{L}_{c,\gamma}^{\es,+}$ is a shift of the $\mathcal{S}^{\prime}$ therein. See also the proof of \cite[Proposition 3.8]{BHan}.
\end{proof}

\begin{lemma}\label{estimate2}
	There exist an $A>0$ and a $\widetilde{c}\in\widetilde{C}$ such that the set
	\begin{align*}
		V_A:=\bigcup_{a:\Vert a \Vert <A}\left(\widetilde{C}+\widetilde{c}+a\right)
	\end{align*}
	intersects with $U_{\Sigma_{\pm}}$, and such that the integral
	\begin{align*}
		\int_{a-{\rm i}\infty}^{a+{\rm i}\infty} \sum_{l^{\prime}\in \widetilde{L}_{c,\gamma}^{\es,+}}I_{l^{\prime}}(s)ds
	\end{align*}
        $($where we use the subscript $l^{\prime}$ to emphasize the dependence of the integrand on $l^{\prime})$ is absolutely convergent on the region $U$ defined by
	\begin{align*}
		U=\{(x_j)\in\C^{n}:\ (-\log|x_j|)\in V_A,\ \ -2\pi<\arg y<0,\ \arg(\mathbf{x})\in (-\pi,\pi)^n\}. 
	\end{align*}
\end{lemma}
\begin{proof}
	The proof is parallel to that of \cite[Lemma 4.12]{MellinBarnes}. The contour is defined by $s=a+{\rm i}t$ for $t\in\R$. By applying \cite[Lemma A.5]{MellinBarnes}, we see that the integrand $I_{l^{\prime}}(s)$ is controlled by 
	\begin{align*}
		|y|^a e^{-(\pi +\arg y) t }(|t|+1)^{R+n/2}e^{-\pi|t|}\sum_{l^{\prime}\in \widetilde{L}_{c,\gamma}^{\es,+}}(4 e k)^{\Vert l^{\prime}\Vert}e^{\sum l_j^{\prime}\log|x_j|}
	\end{align*}
	for some $R>0$ independent of $l^{\prime}$.  We now apply Lemma~\ref{estimate1}; we can choose $\widetilde{c}$ in the interior $\widetilde{C}$ such that on the set $V_A$, we have
	\begin{align*}
		(4 e k)^{\Vert l^{\prime}\Vert}e^{\sum l_j^{\prime}\log|x_j|}\leq e^{-\epsilon\Vert l^{\prime}\Vert}
	\end{align*}
	for some $\epsilon>0$ and any $l^{\prime}\in \widetilde{L}_{c,\gamma}^{\es,+}$ and $x\in U$. Since $\Vert l^{\prime}\Vert$ is of polynomial growth, the sum over all $l^{\prime}$ is still controlled by an exponential function with negative exponent. Hence the integral is absolutely convergent and therefore defines an analytic function over $U$.
\end{proof}

Now Lemma~\ref{estimate2}, together with the fact that the sum of the second term in~\eqref{eq1} over all $l^{\prime}\in \tilde{L}_{c,\gamma}^{\es,+}$ is absolutely convergent (because it is a subseries of a finite sum of the original gamma series), allows us to deduce the following desired analytic continuation of $\Gamma_{c,\gamma}^+$: 
\begin{align*}
	-\sum_{k\in I_-}\sum_{0\leq r<-h_k}C_{\gamma^{(k,r)}}\sum_{l^{\prime\prime}\in \tilde{L}_{c,\gamma^{(k,r)}}^{\es,-}}\sum_{w_0=0}^{\infty}\prod_{j=1}^n\frac{x_j^{\left(l_j^{\prime\prime}-w_0 h_j\right)+\left(\frac{D_j-\frac{h_j}{h_k}D_k}{2\pi{\rm i}}\right)}}{\Gamma\left(1+\left(l_j^{\prime\prime}-w_0h_j\right)+\frac{D_j-\frac{h_j}{h_k}D_k}{2\pi{\rm i}}\right)}, 
\end{align*}
that is, 
\begin{align*}
	-\sum_{k\in I_-}\sum_{0\leq r<-h_k}C_{\gamma^{(k,r)}}\Gamma^{-,\es}_{c,\gamma^{(k,r)}}|_{D_j\to D_j-\frac{h_j}{h_k}D_k}, 
\end{align*}
where the subscript of $\Gamma^{-,\es}_{c,\gamma^{(k,r)}}|_{D_j\to D_j-\frac{h_j}{h_k}D_k}$ denotes substitution of $D_j$ by $D_j-\frac{h_j}{h_k}D_k$ and we have used the fact that $l^{\prime\prime}\in \tilde{L}_{c,\gamma^{(k,r)}}^{\es,-}$. Therefore,  we have proved the following result.

\begin{proposition}\label{essential AC}
	The analytic continuation of\, $\Gamma_{c,\gamma}^{+,\es}$ is given by
	\begin{align*}
		-\sum_{k\in I_-}\sum_{0\leq r<-h_k}C_{\gamma^{(k,r)}}\Gamma^{-,\es}_{c,\gamma^{(k,r)}}|_{D_j\to D_j-\frac{h_j}{h_k}D_k}. 
	\end{align*}
\end{proposition}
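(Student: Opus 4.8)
The plan is to fix a single twisted sector $\gamma\in\operatorname{Box}(\Sigma_+)$ and continue the component $\Gamma_{c,\gamma}^{+,\mathrm{es}}$, since the full essential part is the direct sum over $\gamma$. First I would use the translation action of $h$ on $L_{c,\gamma}^{\mathrm{es},+}$ and the decomposition $L_{c,\gamma}^{\mathrm{es},+}=\tilde{L}_{c,\gamma}^{\mathrm{es},+}+\Z_{\geq 0}h$ to reduce to continuing, for each fixed $l'\in\tilde{L}_{c,\gamma}^{\mathrm{es},+}$, the one-parameter subseries $\sum_{m\geq 0}\prod_j x_j^{l_j'+mh_j+D_j/2\pi\ii}/\Gamma(1+l_j'+mh_j+D_j/2\pi\ii)$. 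Applying the Euler reflection identity $\Gamma(z)\Gamma(1-z)=\pi/\sin(\pi z)$ to the factors indexed by $I_-$ rewrites this as a power series in the single auxiliary variable $y=e^{\ii\pi\sum_{j\in I_-}h_j}\prod_j x_j^{h_j}$ with coefficients in the cohomology of the twisted sector; along the prescribed path only $|y|$ and $\arg y$ move, with $\arg y$ confined to $(-2\pi,0)$, which is exactly the setup in which a Mellin--Barnes continuation applies.

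Next I would introduce the kernel $I(s)$, built from the same $\Gamma$-quotient together with the extra factor $\Gamma(-s)\Gamma(1+s)(e^{\ii\pi}y)^s$; this factor is chosen precisely so that $\res_{s=m}I(s)$ reproduces the $m$-th term of the subseries for $m\geq 0$, while the definition of $\tilde{L}_{c,\gamma}^{\mathrm{es},+}$ forces the residues at $s\in\Z_{<0}$ to vanish. The core step is the standard Mellin--Barnes contour shift: pick a vertical contour $C$ separating the poles $s=p_{k,w}$ ($k\in I_-$, $w\in\Z_{\geq 0}$) on its left from the poles $s\in\Z_{\geq 0}$ on its right, identify $\frac{1}{2\pi\ii}\int_C I(s)\,ds$ with the sum of right-hand residues by closing to the right, and with minus the sum of left-hand residues by closing to the left. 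This is where the restriction $\arg y\in(-2\pi,0)$ is essential: it guarantees that $(e^{\ii\pi}y)^s=e^{(\ii\pi+\log|y|+\ii\arg y)s}$ decays as $|\mathrm{Re}\,s|\to\infty$ on both sides, so that against the $\Gamma$-quotient's Stirling growth and the bounded oscillation of $\pi e^{\ii\pi s}/\sin(-\pi s)$ and $\prod_{j\in I_-}\sin(\cdots)/\sin(\cdots)$ the arc contributions vanish; the requisite uniform estimates on $I(s)$ as $|\mathrm{Im}\,s|\to\infty$ follow as in \cite[Appendix A]{MellinBarnes}. Consequently $\int_C I(s)\,ds$ provides the sought continuation of the subseries and equals $-\sum_{k\in I_-}\sum_{w\geq 0}\res_{s=p_{k,w}}I(s)$.

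It then remains to evaluate the left-hand residues and resum. Rewriting $I(s)$ via Euler's identity into the form with $(1-e^{-2\ii\pi(\cdots)})$ factors, the pole at $p_{k,w}$ contributes $1/(2\pi\ii h_k)$ from $1/(1-e^{-2\ii\pi(l_k'+sh_k+D_k/2\pi\ii)})$; setting $w=w_0(-h_k)+r$ with $0\leq r<-h_k$ and $l''=l'+\frac{l_k'-r}{-h_k}h$ makes the $k$-th coordinate of $l''$ integral, so $\{i:l_i''\notin\Z\}$ is an essential cone of $\Sigma_-$ and defines $\gamma^{(k,r)}\in\operatorname{Box}(\Sigma_-^{\mathrm{es}})$, and I would check $l''\in\tilde{L}_{c,\gamma^{(k,r)}}^{\mathrm{es},-}$. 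Using the coordinated lifting of Remark \ref{choice of lifting} together with the primitivity of $h$ --- $(l'-l'')-(\gamma-\gamma^{(k,r)})$ is a rational multiple of $h$ with integral coordinates, hence an integral multiple of $h$ --- one sees the remaining prefactor depends only on the pair of twisted sectors and equals $C_{\gamma^{(k,r)}}$. Summing $\res_{s=p_{k,w}}I(s)$ over $w_0\geq 0$, then over $l'\in\tilde{L}_{c,\gamma}^{\mathrm{es},+}$ (so that $l''$ ranges over all of $\tilde{L}_{c,\gamma^{(k,r)}}^{\mathrm{es},-}$), and finally over $k\in I_-$ and $0\leq r<-h_k$, the inner sum $\sum_{w_0\geq 0}\sum_{l''}\prod_j x_j^{(l_j''-w_0h_j)+(D_j-\frac{h_j}{h_k}D_k)/2\pi\ii}/\Gamma(\cdots)$ is recognized as $\Gamma^{-,\mathrm{es}}_{c,\gamma^{(k,r)}}$ with $D_j$ replaced by $D_j-\frac{h_j}{h_k}D_k$, which yields the stated formula. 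I expect the genuinely delicate point to be the Mellin--Barnes justification --- that on the prescribed path the contour may be shifted and closed without meeting further poles and with vanishing arcs --- whereas the residue bookkeeping and the identification of $\gamma^{(k,r)}$ and $C_{\gamma^{(k,r)}}$, though lengthy, are mechanical.
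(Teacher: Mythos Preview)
Your proposal is correct and follows essentially the same route as the paper: decompose $L_{c,\gamma}^{\mathrm{es},+}$ via $\tilde{L}_{c,\gamma}^{\mathrm{es},+}+\Z_{\geq 0}h$, apply Euler reflection on the $I_-$ factors, introduce the Mellin--Barnes kernel $I(s)$ with the extra $\Gamma(-s)\Gamma(1+s)(e^{\ii\pi}y)^s$ factor, shift the contour (citing \cite[Appendix A]{MellinBarnes} for the estimates), compute the left-hand residues via the substitution $w=w_0(-h_k)+r$, $l''=l'+\frac{l_k'-r}{-h_k}h$, use primitivity of $h$ and the choice in Remark~\ref{choice of lifting} to see the prefactor depends only on the twisted sectors, and resum. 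The paper also singles out the Mellin--Barnes justification as the step deferred to the reference, so your assessment of which part is delicate matches as well.
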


\section{Fourier--Mukai transforms}\label{sec.fm}

In this section, we compute the Fourier--Mukai transform associated to the toric wall-crossing $\PP_{\Sigma_-}\dashrightarrow\PP_{\Sigma_+}$ and match it with the analytic continuation computed in Section~\ref{sec.ac}.

The Fourier--Mukai transform associated to the flop $\PP_{\Sigma_-}\dashrightarrow \PP_{\Sigma_+}$ was studied by Borisov and Horja in \cite{Ktheory} and \cite{BH}. More precisely, they computed the images of the $K$-theory classes $R_i$ of $\PP_{\Sigma_-}$ under the pullback and pushforward functors and obtained the  formulae for the $K$-theoretic Fourier--Mukai transforms given below. See \cite[Propositions 5.1 and~5.2]{MellinBarnes}.

Before we state their result, we introduce some notation. Let $\gamma_+=\sum_{j}(\gamma_+)_j v_j$ be an essential twisted sector of $\Sigma_+$. We denote by $\mathcal{I}(\gamma_+)$ the set of complex numbers $t$ such that $e^{2\pi{\rm i}(\gamma_+)_j}\cdot t^{h_j}=1$ for some $j\in I_-$. It is proved in \cite[Section 4]{MellinBarnes} that this set is finite, consists of roots of unity and is in 1-1 correspondence with the adjacent twisted sectors of $\gamma_+$.

\begin{proposition}\label{BHformula}\leavevmode
\begin{enumerate}
        \item For any analytic function $\varphi$ and $J$ which is not a subcone of any essential cone, the class
	\begin{align*}
		\prod_{j\in J}(1-R_j)\varphi(R)
	\end{align*}
	remains unchanged under the Fourier--Mukai transform.
        \item For any analytic function $\varphi$, the image of the $K$-theory class $\varphi(R)=\varphi(R_1,\ldots,R_n)$ under the Fourier--Mukai transform $\FM$ is given by 
	\begin{align*}
		\FM(\varphi(R)) =(\FM(\varphi)(\mathcal{R}))(1), 
	\end{align*}
	where the function $\FM(\varphi)$ is defined as 
	\begin{align*}
		\FM(\varphi)(r)=\varphi(r)-\sum_{t\in\mathcal{I}}\int_{C_t}T\left(r,\hat{t}\right)\varphi\left(r\hat{t}^h\right)d\hat{t}, 
	\end{align*}
	where $T(r,\hat{t})=\frac{1}{2\pi{\rm i}(\hat{t}-1)}\prod_{j\in I_-}\frac{1-r_j^{-1}}{1-r_j^{-1}\hat{t}^{-h_j}}$ and $\mathcal{I}$ is a set of roots of unity of a large enough order such that it contains the set $\mathcal{I}(\gamma_+)$ defined above for all essential twisted sectors $\gamma_+$ of\, $\Sigma_+$. The contours $C_t$ for $t\in\mathcal{I}$ are circles defined in a way such that they include all poles of the integrand in the interior, and $\mathcal{R}_i$ is the endomorphism on $K_0(\PP_{\Sigma_+})$ defined by multiplication by $R_i$.
\end{enumerate}
\end{proposition}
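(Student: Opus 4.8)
The plan is to realize $\operatorname{FM}$ explicitly through the common resolution $\PP_{\hat\Sigma}$ and then reduce both assertions to a local toric computation near the flopping locus. Recall from the diagram above that $f_\pm\colon\PP_{\hat\Sigma}\to\PP_{\Sigma_\pm}$ are birational, that on complexified $K$-theory $\operatorname{FM}$ is computed by pulling back along $f_-$ and pushing forward along $f_+$ (up to a line-bundle twist on $\PP_{\hat\Sigma}$ that does not affect the argument), and that $f_\pm$ are isomorphisms over the complement of the toric substacks attached to the essential cones. For part (1), observe that if $J\notin\Sigma_\pm$ then $\prod_{j\in J}(1-R_j)=0$ in $K_0$ by the Stanley-Reisner relations, while if $J\in\Sigma_\pm$ is not contained in any essential cone then, under the identification of $1-R_j$ with a line-bundle twist of the class of $\mathcal O_{D_j}$, the product $\prod_{j\in J}(1-R_j)\varphi(R)$ is represented by a complex supported on $V(J)$, which is disjoint from the locus $Z_\pm$ where $f_\pm$ fail to be isomorphisms. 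Hence $f_-^*$ followed by $(f_+)_*$ merely transports this class along the isomorphism $\PP_{\Sigma_-}\setminus Z_-\cong\PP_{\Sigma_+}\setminus Z_+$ and leaves it unchanged; this is the standard locality of Fourier-Mukai kernels supported off the flopped locus.

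For part (2), the genuine content, I would work in the local model in which $f_-\colon\PP_{\hat\Sigma}\to\PP_{\Sigma_-}$ is the weighted blow-up adding the ray $\hat v$, so that $\PP_{\hat\Sigma}$ is a weighted projective bundle over the flopping center in $\PP_{\Sigma_-}$ with weights governed by $(h_j)_{j\in I}$, and symmetrically for $f_+$. Consequently $K_0(\PP_{\hat\Sigma})$ is generated over $f_+^*K_0(\PP_{\Sigma_+})$ by the exceptional class $R_{\hat v}$ subject to an explicit relation, which lets me write $f_+^*\varphi(R)$ in these coordinates and then apply the blow-down push-forward formula for $(f_-)_*$. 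The push-forward along a weighted projective bundle is a toric residue: it extracts, via a one-variable contour integral in the variable $\hat t$ dual to $R_{\hat v}$, the appropriate coefficient, and summing the ensuing geometric series in $R_{\hat v}$ collapses the data into the kernel $T(r,\hat t)=\frac{1}{2\pi\ii(\hat t-1)}\prod_{j\in I_-}\frac{1-r_j^{-1}}{1-r_j^{-1}\hat t^{-h_j}}$ together with the reparametrization $\varphi(r)\mapsto\varphi(r\hat t^h)$ coming from the way $f_-^*$ twists the classes $R_i$. Choosing the contours $C_t$, $t\in\mathcal I$, to enclose exactly the poles of $T(r,\hat t)\varphi(r\hat t^h)$ produced by the factors $(1-r_j^{-1}\hat t^{-h_j})^{-1}$ and $(\hat t-1)^{-1}$ then yields $\operatorname{FM}(\varphi)(r)=\varphi(r)-\sum_{t\in\mathcal I}\int_{C_t}T(r,\hat t)\varphi(r\hat t^h)\,d\hat t$, and passing from this functional-calculus version in $r$ to the commuting operators $\mathcal R_i$ and evaluating at $1$ gives $\operatorname{FM}(\varphi(R))=(\operatorname{FM}(\varphi)(\mathcal R))(1)$.

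I expect the main difficulty to be bookkeeping rather than any single hard idea: (a) tracking the stacky weights so that the $h_j$ enter with the correct signs and $R_{\hat v}$ satisfies the right relation in $K_0(\PP_{\hat\Sigma})$; (b) pinning down the index set $\mathcal I$ and the radii of the circles $C_t$ so that they capture precisely the intended poles and no others; and (c) handling the non-compactness of $\PP_{\Sigma_\pm}$, so that $(f_\pm)_*$ and the residue manipulations make sense on the relevant completed or localized $K$-groups. Since exactly this computation is carried out in \cite{Ktheory,BH} and repackaged in \cite[\S 5]{MellinBarnes}, after setting up the local model as above I would quote those references for the detailed verification.
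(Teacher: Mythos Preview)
The paper does not prove this proposition at all: it is stated as a quotation of \cite[Proposition 5.1, 5.2]{MellinBarnes} (see the sentence immediately preceding the statement), and is used as a black box in the subsequent computations. Your proposal, by contrast, sketches an actual argument---locality away from the flopping locus for part (1), and the residue description of pushforward along the weighted projective bundle $\PP_{\hat\Sigma}\to\PP_{\Sigma_\pm}$ for part (2)---which is indeed the strategy carried out in \cite{Ktheory} and \cite{MellinBarnes}. Since you yourself conclude by deferring to those references for the detailed verification, your proposal and the paper end up in the same place: the result is imported from \cite{MellinBarnes}. Your sketch of the underlying mechanism is accurate and would be appropriate as motivation, but for the purposes of this paper no proof is required beyond the citation.
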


In this section, we use the formulae of Borisov--Horja to compute the Fourier--Mukai transform of the non-essential part $\Gamma_c^{-}-\Gamma^{-,\es}_{c}$ and the essential part $\Gamma^{-,\es}_{c}$ separately. A comparison of the computation in this section with the one in the last section hence yields the FM=AC result for $\bbGKZ(C,0)$.

The non-essential part is easier to deal with.

\begin{proposition}\label{non-essential FM}
	The Fourier--Mukai transform $\FM(\Gamma_c^{-}-\Gamma^{-,\es}_{c})$ is given by
	\begin{align*}
		\FM\left(\Gamma_c^{-}-\Gamma^{-,\es}_{c}\right)=\Gamma_c^{-}-\Gamma^{-,\es}_{c}. 
	\end{align*}
\end{proposition}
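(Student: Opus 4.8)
The plan is to reduce the claim to part (1) of Proposition~\ref{BHformula} by showing that the non-essential part of the Gamma series, when expanded as a $K_0(\PP_{\Sigma_-})$-valued power series, is a convergent combination of classes of the form $\prod_{j\in J}(1-R_j)\varphi(R)$ with $J$ not contained in any essential cone. First I would recall that each monomial term $\prod_{j=1}^n x_j^{l_j+\frac{D_j}{2\pi\ii}}/\Gamma(1+l_j+\frac{D_j}{2\pi\ii})$ appearing in $\Gamma_c^{-}-\Gamma^{-,\mathrm{es}}_c$ corresponds to some $l\in L_{c,\gamma,\sigma}^{-}$ with $\sigma$ a maximal cone of $\Sigma_-$ that is \emph{not} essential, so that $I(l)=\{i:l_i\in\Z_{<0}\}\sqcup\sigma(\gamma)$ is contained in a non-essential maximal cone. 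Under the correspondence between orbifold cohomology classes $D_i$ and $K$-theory classes $R_i=e^{D_i/2\pi\ii}$ (equivalently, reading the Gamma series as a $K_0$-valued object via the Chern character as in Section~\ref{sec.basics}), I would argue that such a term is divisible, in $K_0(\PP_{\Sigma_-})$, by $\prod_{i\in I\setminus\sigma}(1-R_i)$ for a suitable circuit-complementary index set, because the negative-integer coordinates of $l$ force the Gamma-function denominators to contribute factors that vanish on the Stanley--Reisner relations away from $\sigma$.

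Next I would make this divisibility precise: group the terms of $\Gamma_c^{-}-\Gamma^{-,\mathrm{es}}_c$ by the cone $\sigma\in\Sigma_-\setminus\Sigma_-^{\mathrm{es}}$ (or more carefully by the minimal non-essential face containing $I(l)$), and for each such $\sigma$ write the corresponding subseries as $\prod_{j\in J_\sigma}(1-R_j)\cdot\varphi_\sigma(R)$ where $J_\sigma$ is a subset of the circuit $I$ that is not a subcone of any essential cone (this is exactly the combinatorial hypothesis of part (1) of Proposition~\ref{BHformula}), and $\varphi_\sigma$ is analytic on the relevant polydisc by Proposition~\ref{convergence}. Since $\operatorname{FM}$ is linear and continuous on convergent series of $K$-theory classes, and fixes each such product by Proposition~\ref{BHformula}(1), it fixes the whole non-essential part. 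The conclusion $\operatorname{FM}(\Gamma_c^{-}-\Gamma^{-,\mathrm{es}}_c)=\Gamma_c^{-}-\Gamma^{-,\mathrm{es}}_c$ follows.

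The main obstacle I expect is the bookkeeping in the second step: showing that the index set $J_\sigma$ that naturally appears can always be taken to meet the hypothesis ``$J$ is not a subcone of any essential cone'' of Proposition~\ref{BHformula}(1). Concretely, for $l\in L_{c,\gamma,\sigma}^{-}$ with $\sigma$ non-essential, one needs that the set of indices forced to contribute $(1-R_j)$-type factors (roughly $I\setminus\sigma$ together with indices where $l_j$ is a negative integer) genuinely fails to lie in any essential cone; this uses the definition of essential cone as $\mathcal{F}\sqcup(I\setminus i)$ with $i\in I_{\pm}$ and the fact that a non-essential maximal cone of $\Sigma_-$ cannot contain $I\setminus i$ for any $i\in I_-$, hence must omit at least two elements of the circuit or one element of $I_+$. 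A secondary technical point is justifying term-by-term application of $\operatorname{FM}$: one should invoke the convergence of the relevant subseries on $U_{\Sigma_-}$ (Proposition~\ref{convergence} and the following remark) so that the $K_0$-valued series is a genuine analytic function to which the integral-transform description of $\operatorname{FM}$ in Proposition~\ref{BHformula}(2) applies, and then note that part (1) kills each contour integral. These are routine once the combinatorial claim is nailed down, so the essential content is the cone-theoretic argument that $J_\sigma$ is never contained in an essential cone.
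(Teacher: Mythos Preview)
Your overall strategy matches the paper's: exhibit each term of the non-essential part, read as a $K_0$-class via the Chern character, in the form $\prod_{j\in J}(1-R_j)\varphi(R)$ with $J$ not contained in any essential cone, and then invoke Proposition~\ref{BHformula}(1). The divergence is in the choice of $J$, and your choice is what manufactures the ``main obstacle'' you describe.

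The paper works term by term and takes $J=I(l)=\{i:l_i\notin\Z_{\geq 0}\}$. By the very definition of $L_{c,\gamma}^{\mathrm{es},-}$, a term lies in the non-essential part precisely when $I(l)$ is not a subcone of any essential maximal cone, so there is nothing combinatorial to verify. For each $j$ with $l_j\in\Z_{<0}$ the factor $1/\Gamma(1+l_j+\tfrac{D_j}{2\pi\ii})$ contributes a factor of $D_j$, and $D_j=(1-e^{D_j})\cdot(\text{unit})$; after taking the direct sum over twisted sectors and passing through the Chern character one obtains the required factor $\prod_{j\in J}(1-R_j)$ in $K_0(\PP_{\Sigma_-})$. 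Proposition~\ref{BHformula}(1) then applies immediately.

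Your candidate $J_\sigma$, by contrast, is taken inside the circuit (you write ``$I\setminus\sigma$'' and later ``$J_\sigma$ is a subset of the circuit $I$''). This is not what the Gamma denominators actually produce: the negative-integer coordinates of $l$ lie in $I(l)\subseteq\sigma$, not in $I\setminus\sigma$, so the divisibility you assert does not follow from the mechanism you invoke. Worse, $I\setminus\sigma$ can genuinely sit inside an essential cone of $\Sigma_-$ (this happens whenever the non-essential maximal cone $\sigma$ contains some element of $I_-$, since then $I\setminus\sigma\subseteq I\setminus\{i\}$), so for that choice of $J$ the obstacle is real. It vanishes once you use $J=I(l)$, and there is no reason to restrict $J$ to subsets of the circuit in the first place: Proposition~\ref{BHformula}(1) imposes no such constraint. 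Your remarks on convergence to justify term-by-term application of $\operatorname{FM}$ are fine; the paper treats that point lightly.
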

\begin{proof}
	By definition, for each single term $\prod_{j=1}^n\frac{x_j^{l_j+\frac{D_j}{2\pi {\rm i}}}}{\Gamma(1+l_j+\frac{D_j}{2\pi {\rm i}})}$ in the non-essential part $\Gamma_c^{-}-\Gamma_c^{-,\es}$, the set $I(l)$ (and therefore $\{i:\ l_i\in\Z_{<0}\}$) is not a subcone of any essential cone. Hence it contributes a factor of the form $\prod_{j\in J}D_j$, where $J$ is not a subcone of any essential cone. Note that $D_j$ can be written as the product of $1-e^{D_j}$ with an invertible element; the original product can  therefore be written as
	\begin{align*}
		\prod_{j\in J}\left(1-e^{D_j}\right)\widetilde{\varphi}(D), 
	\end{align*}
	where $\widetilde{\varphi}$ is an analytic function. Taking the direct sum over all twisted sectors $\gamma$, we see that under the Chern character $K_0(\PP_{\Sigma_-})\xrightarrow{\lowsim}\oplus_{\gamma}H_{\gamma}$, the non-essential part $\Gamma_c^{-}-\Gamma_c^{-,\es}$ is exactly of the form $\prod_{j\in J}(1-R_j)\varphi(R)$ for some analytic function $\varphi$. Now the statement follows from the first part of  Proposition~\ref{BHformula}.
\end{proof}

In order to compare the Fourier--Mukai transform of the essential part $\Gamma^{-,\es}_{c}$ with the analytic continuation computed in the last section, we first rewrite the formula in the second part of Proposition~\ref{BHformula} by computing the residue of the integrand explicitly. Recall that $K_0(\PP_{\Sigma})$ is a semi-local ring whose maximal ideals are in 1-1 correspondence with twisted sectors $\gamma\in\Box(\Sigma)$.

\begin{proposition}\label{FMresidue}
	For any analytic function $\varphi$,  we have
	\begin{align*}
		\FM(\varphi)(z)_{\gamma}=
	\begin{cases}
		\hphantom{{-}}\varphi(z)  &  \text{if }\gamma\not\in\Box(\Sigma_+)^{\es},\\
		-\sum_{k\in I_-}\sum_{0\leq r<-h_k}C_{\gamma^{(k,r)}}\varphi\left(z\left(p^{(k,r)}\right)^h\right)  & \text{if }\gamma\in\Box(\Sigma_+)^{\es}, 
	\end{cases}
	\end{align*}
	where
        \[p^{(k,r)}:=e^{-2{\rm i}\pi \left(\frac{\gamma_k-\gamma^{(k,r)}_k}{-h_k}-\frac{D_k}{2\pi{\rm i}h_k}\right)}\]
        for $k\in I_-$, $0\leq r <-h_k$.
\end{proposition}
\begin{proof}
	Fix a twisted sector $\gamma\in\Box(\Sigma_+)$. We localize at the point $r_j=e^{D_j+2\pi{\rm i}\gamma_j}$ corresponding to $\gamma$. If $\gamma\not\in\Box(\Sigma_+)^{\es}$, then by the definition of $\mathcal{I}$, the integration kernel $T(r,\hat{t})$ has no pole inside the contours $C_t$, and therefore, the second term in the formula is equal to zero.
	
	Now suppose $\gamma\in\Box(\Sigma_+)^{\es}$. The poles of $T(r,\hat{t})$ are 1 together with points  $\hat{t}$ such that there exists a $k\in I_-$ with $r_k\hat{t}^{h_k}=1$,  where $r_k=e^{D_k+2\pi{\rm i}\gamma_k}$. The set of poles is then
	\begin{align*}
		\left\{e^{-\frac{1}{h_k}\left(2\pi{\rm i}\gamma_k+D_k\right)}\left(e^{\frac{2\pi{\rm i}}{h_k}}\right)^r\ :\ k\in I_-,0\leq r<-h_k\right\}.
	\end{align*}
	An elementary calculation shows that this set is in fact the same as
	\begin{align*}
		\left\{p^{(k,r)}:=e^{-2{\rm i}\pi\left(\frac{\gamma_k-\gamma^{(k,r)}_k}{-h_k}-\frac{D_k}{2\pi{\rm i}h_k}\right)}\ :\ k\in I_-, 0\leq r<-h_k\right\}, 
	\end{align*}
	where $\gamma^{(k,r)}$ is defined as in Section~\ref{sec.ac}.\footnote{We note that the $\gamma_{k}^{(k,r)}$ denotes the $\supth{k}$ coordinate of the \textit{lifting we chose for }$\gamma^{(k,r)}$ in the sense of Lemma~\ref{choice of lifting}, not necessarily equal to $\{l_k^{\prime\prime}\}$, which is zero.} To demonstrate this, we observe that according to Remark~\ref{liftings differ by h}, any two liftings differ by an integer multiple of $h$. Thus, $p^{(k,r)}$ does not depend on the choice of lifting, and the set $\{\gamma_k^{(k,r)}\}$ is equal to $\{0,1,\ldots,-h_k-1\}$ modulo $-h_k$. To compute the residue of $T(r,\hat{t})$ at these poles, it suffices to consider the factor $\frac{1-r_k^{-1}}{1-r_k^{-1}\hat{t}^{-h_k}}$. The residue is then equal to
	\begin{align*}
		2\pi{\rm i}\res_{\hat{t}=p^{(r,k)}}T\left(r,\hat{t}\right)&=\frac{1}{p^{(r,k)}-1}\prod_{\substack{j\in I_- \\ j\not=k}}\frac{1-e^{-D_j-2\pi{\rm i}\gamma_j}}{1-e^{-D_j-2\pi{\rm i}\gamma_j}\left(p^{(r,k)}\right)^{-h_j}}\cdot \frac{p^{(r,k)}(1-r_k^{-1})}{h_k}\\
		&=\frac{e^{2\pi{\rm i}\left(-\gamma_k-\frac{D_k}{2\pi{\rm i}}\right)}-1}{h_k(e^{\frac{2{\rm i}\pi}{-h_k}\left(\gamma^{(r,k)}_k-\gamma_k-\frac{D_k}{2\pi{\rm i}}\right)}-1)}\prod_{\substack{ j\in I_- \\ j\not=k }}\frac{1-e^{-2{\rm i}\pi \left(\gamma_j+\frac{D_j}{2\pi{\rm i}}\right)}}{1-e^{-2{\rm i}\pi \left(\gamma_j^{(r,k)}+\frac{D_j-\frac{h_j}{h_k}D_k}{2\pi{\rm i}}\right)}}\\
		&=C_{\gamma^{(k,r)}}.
	\end{align*}
Putting all these together, we obtain the desired result.
\end{proof}

\begin{corollary}\label{essential FM}
	The Fourier--Mukai transform $\FM(\Gamma^{-,\es}_{c})$ is given by
	\begin{align*}
		\FM(\Gamma^{-,\es}_{c})_{\gamma}=-\sum_{k\in I_-}\sum_{0\leq r<-h_k}C_{\gamma^{(k,r)}}\Gamma^{-}_{c,\gamma^{(k,r)}}|_{D_j\to D_j-\frac{h_j}{h_k}D_k}
	\end{align*}
	for each twisted sector $\gamma\in\Box(\Sigma_+)^{\es}$.
	\end{corollary}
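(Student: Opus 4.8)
The plan is to read the corollary off Proposition~\ref{FMresidue} after presenting $\Gamma_c^{-,\mathrm{es}}$ as a $K$-theory class of the expected form. First, exactly as in the proof of Proposition~\ref{non-essential FM}, I would observe that $\Gamma_c^{-,\mathrm{es}}$ is of the form $\varphi(R)$ for an analytic function $\varphi$: under the Chern character $K_0(\PP_{\Sigma_-})\xrightarrow{\sim}\bigoplus_{\gamma'}H_{\gamma'}$ and the substitution $R_j=e^{D_j}$, each term $\prod_{j}x_j^{l_j+D_j/2\pi\ii}/\Gamma(1+l_j+D_j/2\pi\ii)$ becomes an analytic function of $R_1,\dots,R_n$, and summing over $l\in L^{\mathrm{es},-}_{c,\gamma'}$ and over the twisted sectors $\gamma'$ produces the desired $\varphi$. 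Then, by Proposition~\ref{BHformula}, $\operatorname{FM}(\Gamma_c^{-,\mathrm{es}})=\bigl(\operatorname{FM}(\varphi)(\mathcal R)\bigr)(1)$, and Proposition~\ref{FMresidue} applies.

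Next I would apply Proposition~\ref{FMresidue} componentwise. Localizing the output at a twisted sector $\gamma\in\operatorname{Box}(\Sigma_+)^{\mathrm{es}}$, i.e.\ setting $r_j=e^{D_j+2\pi\ii\gamma_j}$, gives
\[
\operatorname{FM}(\Gamma_c^{-,\mathrm{es}})_{\gamma}=-\sum_{k\in I_-}\sum_{0\le r<-h_k}C_{\gamma^{(k,r)}}\,\varphi\bigl(r\,(p^{(k,r)})^{h}\bigr),
\]
so the task reduces to identifying $\varphi\bigl(r\,(p^{(k,r)})^{h}\bigr)$ with $\Gamma^{-}_{c,\gamma^{(k,r)}}|_{D_j\to D_j-\frac{h_j}{h_k}D_k}$. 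For this I would compute the substitution explicitly: using the formula for $p^{(k,r)}$ from the proof of Proposition~\ref{FMresidue} together with the definition of $\gamma^{(k,r)}$ recalled in Section~\ref{sec.ac}, one checks that $r_j\,(p^{(k,r)})^{h_j}=\exp\!\bigl((D_j-\tfrac{h_j}{h_k}D_k)+2\pi\ii\,\gamma^{(k,r)}_j\bigr)$ for every $j$. Hence evaluating $\varphi=\bigoplus_{\gamma'}\Gamma_{c,\gamma'}^{-,\mathrm{es}}$ at this point singles out the $\gamma^{(k,r)}$-summand and replaces each divisor class $D_j$ of $\PP_{\Sigma_-}$ by $D_j-\frac{h_j}{h_k}D_k$, so that $\varphi\bigl(r\,(p^{(k,r)})^{h}\bigr)=\Gamma^{-,\mathrm{es}}_{c,\gamma^{(k,r)}}|_{D_j\to D_j-\frac{h_j}{h_k}D_k}$.

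Finally I would account for the discrepancy between $\Gamma^{-,\mathrm{es}}_{c,\gamma^{(k,r)}}$ and the full component $\Gamma^{-}_{c,\gamma^{(k,r)}}$ that appears in the statement. Since $\gamma^{(k,r)}\in\operatorname{Box}(\Sigma_-^{\mathrm{es}})$, its minimal cone $\sigma(\gamma^{(k,r)})$ is an essential cone of $\Sigma_-$, hence a maximal cone; therefore any $l\in L_{c,\gamma^{(k,r)}}$ with a nonzero contribution has $I(l)$ a subcone of a maximal cone and containing $\sigma(\gamma^{(k,r)})$, which forces $I(l)=\sigma(\gamma^{(k,r)})$, an essential cone. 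Thus $L_{c,\gamma^{(k,r)}}=L^{\mathrm{es},-}_{c,\gamma^{(k,r)}}$ and $\Gamma^{-}_{c,\gamma^{(k,r)}}=\Gamma^{-,\mathrm{es}}_{c,\gamma^{(k,r)}}$, which completes the argument (and, by the first case of Proposition~\ref{FMresidue}, the components of $\operatorname{FM}(\Gamma_c^{-,\mathrm{es}})$ at non-essential twisted sectors of $\Sigma_+$ vanish because $\Gamma^{-,\mathrm{es}}_{c,\gamma}=0$ there). The only real work is the bookkeeping in the middle step -- matching the multiplicative substitution $r\mapsto r\,(p^{(k,r)})^{h}$ on $K_0(\PP_{\Sigma_+})$ with the additive substitution $D_j\to D_j-\frac{h_j}{h_k}D_k$ and with the passage from $\gamma$ to $\gamma^{(k,r)}$, while keeping careful track of signs, of the roots of unity entering $p^{(k,r)}$, and of the lifting convention of Remark~\ref{choice of lifting}. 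None of this is deep once Proposition~\ref{FMresidue} is available.
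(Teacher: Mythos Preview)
Your approach is the paper's approach: the paper's entire proof is the single line ``Apply Proposition~\ref{FMresidue} to $\Gamma_c^{-,\mathrm{es}}$,'' and your first three steps are a correct, detailed unpacking of that application, arriving at the formula with $\Gamma^{-,\mathrm{es}}_{c,\gamma^{(k,r)}}$ on the right-hand side.

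Your final step, however, contains a real gap. You assert that because $\gamma^{(k,r)}\in\operatorname{Box}(\Sigma_-^{\mathrm{es}})$, the minimal cone $\sigma(\gamma^{(k,r)})$ is itself an essential (hence maximal) cone. But the definition of an essential twisted sector in Section~\ref{sec.basics} (where a verb is visibly missing) only asks that $\sigma(\gamma^{(k,r)})$ be \emph{contained in} an essential cone, not equal to one; this is the only reading consistent with the remark, just after the definition of $L_{c,\gamma}^{\mathrm{es}}$, that $L_{c,\gamma}^{\mathrm{es}}\ne\varnothing$ only when $\gamma\in\operatorname{Box}(\Sigma^{\mathrm{es}})$. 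In general $\sigma(\gamma^{(k,r)})$ is a proper face of a maximal cone, and then there can be $l\in L_{c,\gamma^{(k,r)}}$ with $I(l)$ contained in a non-essential maximal cone, so that $\Gamma^{-}_{c,\gamma^{(k,r)}}\ne\Gamma^{-,\mathrm{es}}_{c,\gamma^{(k,r)}}$. Comparing with Proposition~\ref{essential AC} (which carries the ``$\mathrm{es}$'' superscript) and with the way Theorem~\ref{conj for usual system} assembles Propositions~\ref{non-eseential AC}, \ref{essential AC}, \ref{non-essential FM} and this corollary, the discrepancy you are trying to bridge is almost certainly a typo in the corollary's statement; your first three steps already prove the intended version, and the last step should simply be dropped.
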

\begin{proof}
	Apply Proposition~\ref{FMresidue} to $\Gamma_c^{-,\es}$.
\end{proof}

\begin{theorem}\label{conj for usual system}
	The following diagram commutes:
	\begin{align*}
		\xymatrix{
K_0(\PP_{\Sigma_+})^{\vee}\ar[d]^{\FM^{\vee}}\ar[r]^-{-\circ\Gamma_+} & \Sol(\bbGKZ(C,U_+))\ar[d]^{\MB} \\
K_0(\PP_{\Sigma_-})^{\vee}\ar[r]^-{-\circ\Gamma_-} & \Sol(\bbGKZ(C,U_-))\rlap{.} \\
    }
	\end{align*}
\end{theorem}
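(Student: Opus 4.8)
The plan is to assemble the commutativity of the square from the three computational results already established: Proposition \ref{non-eseential AC} and Proposition \ref{essential AC} for the left-to-bottom-to-right composite (analytic continuation), and Proposition \ref{non-essential FM} together with Corollary \ref{essential FM} for the top-to-right-to-bottom composite (Fourier-Mukai). Since every map in the square is $\C$-linear and the Gamma series $\Gamma_c$ decomposes as a direct sum over twisted sectors $\gamma\in\operatorname{Box}(\Sigma)$, and further (for a fixed $\gamma$) into its essential part $\Gamma_{c,\gamma}^{\mathrm{es}}$ and its non-essential part, it suffices to check the identity on each of these pieces separately. First I would unwind what the diagram actually asserts: an element $\xi\in K_0(\PP_{\Sigma_+})^{\vee}$ goes, along the top and right, to $\operatorname{MB}(\xi\circ\Gamma_+)$, the analytic continuation of the solution $\xi\circ\Gamma_+$ along the path $\lambda$; along the left and bottom it goes to $(\operatorname{FM}^{\vee}\xi)\circ\Gamma_- = \xi\circ\operatorname{FM}(\Gamma_-)$. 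So the claim is precisely that $\operatorname{MB}(\Gamma_+) = \operatorname{FM}(\Gamma_-)$ as $K_0(\PP_{\Sigma_+})$-valued (equivalently $H^*_{\mathrm{orb}}$-valued) functions on $U_-$, after identifying source and target of $\operatorname{FM}$ via the appropriate $K$-theory bases; once this functional identity is in hand, precomposing with any functional $\xi$ gives the diagram.

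The non-essential part is immediate: Proposition \ref{non-eseential AC} gives $\operatorname{MB}(\Gamma_c^+ - \Gamma_c^{+,\mathrm{es}}) = \Gamma_c^- - \Gamma_c^{-,\mathrm{es}}$, while Proposition \ref{non-essential FM} gives $\operatorname{FM}(\Gamma_c^- - \Gamma_c^{-,\mathrm{es}}) = \Gamma_c^- - \Gamma_c^{-,\mathrm{es}}$, and these agree. For the essential part, fix $\gamma\in\operatorname{Box}(\Sigma_+)^{\mathrm{es}}$. Proposition \ref{essential AC} computes the analytic continuation of $\Gamma_{c,\gamma}^{+,\mathrm{es}}$ as
\[
-\sum_{k\in I_-}\sum_{0\leq r<-h_k}C_{\gamma^{(k,r)}}\,\Gamma^{-,\mathrm{es}}_{c,\gamma^{(k,r)}}\big|_{D_j\to D_j-\frac{h_j}{h_k}D_k},
\]
and Corollary \ref{essential FM} computes the Fourier-Mukai transform of $\Gamma^{-,\mathrm{es}}_c$, in its $\gamma$-component, as the same expression (with $\Gamma^{-}_{c,\gamma^{(k,r)}}$ in place of $\Gamma^{-,\mathrm{es}}_{c,\gamma^{(k,r)}}$; but the non-essential part of $\Gamma^-_{c,\gamma^{(k,r)}}$ was already accounted for in the non-essential comparison, so only the essential part survives in the relevant component). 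Matching term by term — the coefficients $C_{\gamma^{(k,r)}}$ literally coincide, as do the substituted Gamma series — gives the essential-part identity. Summing over all $\gamma$ and over the essential/non-essential splitting yields $\operatorname{MB}(\Gamma_+) = \operatorname{FM}(\Gamma_-)$, and precomposing with $\xi\in K_0(\PP_{\Sigma_+})^{\vee}$ closes the argument.

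The main obstacle is bookkeeping rather than analysis: one must carefully justify that the indexing of the poles $p^{(k,r)}$ in the Mellin-Barnes contour integral of Section \ref{sec.ac} is identified, under the Chern-character isomorphism $K_0(\PP_{\Sigma})\xrightarrow{\sim}\bigoplus_\gamma H_\gamma$ and the localization at $r_j = e^{D_j + 2\pi\ii\gamma_j}$, with the poles of the Fourier-Mukai kernel $T(r,\hat t)$ appearing in Proposition \ref{FMresidue}, and that the substitution $D_j\mapsto D_j - \tfrac{h_j}{h_k}D_k$ on the cohomology side corresponds to the $K$-theoretic operation $r\mapsto r(p^{(k,r)})^h$ (equivalently $R_j\mapsto R_j \cdot (\text{the appropriate root of unity / line-bundle twist})$). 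Concretely I would verify that the assignment $\gamma\mapsto\gamma^{(k,r)}$ of Section \ref{sec.ac} is exactly the combinatorial adjacency of twisted sectors across the wall (Definition \ref{adjacent twisted sectors}), using the fixed compatible lifting of Remark \ref{choice of lifting} — this is what makes the coefficient $C_{\gamma^{(k,r)}}$ well-defined and identical on both sides — and that the common blow-up picture $\PP_{\hat\Sigma}$ does not introduce any discrepancy in the normalization of $\operatorname{FM}$ versus $\operatorname{MB}$. Once these identifications are recorded, the theorem follows formally.
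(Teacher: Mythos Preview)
Your proposal is correct and takes essentially the same approach as the paper: the paper's own proof of Theorem \ref{conj for usual system} is a one-line citation of Proposition \ref{non-eseential AC}, Proposition \ref{essential AC}, Proposition \ref{non-essential FM}, and Corollary \ref{essential FM}, which is exactly the assembly you describe. Your additional bookkeeping discussion (matching the poles $p^{(k,r)}$, the substitution $D_j\mapsto D_j-\tfrac{h_j}{h_k}D_k$, and the $\Gamma^-$ versus $\Gamma^{-,\mathrm{es}}$ discrepancy in Corollary \ref{essential FM}) goes beyond what the paper spells out, but is consistent with how the pieces fit together.
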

\begin{proof}
	This is equivalent to proving that for any linear function $\varphi\colon K_0(\PP_{\Sigma_+})\rightarrow \CC$, there holds
\begin{align*}
	\varphi\circ\FM\circ \Gamma_- = \MB(\varphi\circ\Gamma_+),
\end{align*}
which follows directly from Propositions~\ref{non-eseential AC},~\ref{essential AC} and~\ref{non-essential FM} and Corollary~\ref{essential FM}. It is important to note that this also explains why, in Section~\ref{sec.ac}, we made the assumption that all $D_i$ are generic complex numbers.
\end{proof}

\section{Compactly supported derived categories and dual systems}\label{sec.dual}

In this section, we make use of the duality result in \cite{BHan} to prove the analogous result for the dual system $\bbGKZ(C^{\circ},0)$.

Recall that there is a similarly defined Gamma series solution $\Gamma^\circ$ with values in the compactly supported orbifold cohomology $H^{*,c}_{\orb}=\bigoplus_\gamma H_\gamma^c$ to the dual system $\bbGKZ(C^{\circ},0)$. We define
\begin{align*}
\Gamma_c^{\circ}(x_1,\ldots,x_n) =\bigoplus_{\gamma} \sum_{l\in L_{c,\gamma}} \prod_{i=1}^n 
\frac {x_i^{l_i+\frac {D_i}{2\pi \ii }}}{\Gamma\left(1+l_i+\frac {D_i}{2\pi \ii }\right)}\left(\prod_{i\in\sigma}D_i^{-1}\right)F_{\sigma}, 
\end{align*}
where $\sigma$ is the set of $i$ with $l_i\in\Z_{<0}$ and the $F_{\sigma}$ are generators of $H^{*,c}_{\orb}$ as a module over $H_{\orb}^*$. 

In \cite{BHconj}, a compactly supported version $K_0^c(\PP_{\Sigma})$ of the $K$-theory of $\PP_{\Sigma}$ is defined in a purely combinatorial manner: The group $K_0^c(\PP_{\Sigma})$ is generated as a $K_0(\PP_{\Sigma})$-module by the $G_I$ for $I\in\Sigma$ such that $\sigma_{I}^{\circ}\subseteq C^{\circ}$ with the following relations for all $i\not\in I$:
\begin{align*}
	\left(1-R_i^{-1}\right)G_I=\begin{cases}G_{I\cup\{i\}}&\text{if }I\cup\{i\}\in\Sigma, \\ 0&\text{otherwise}.\end{cases}
\end{align*}
It is also proved in \cite{BHconj} that $K_0^c(\PP_{\Sigma})$ is naturally isomorphic to the Grothendieck group $K_0^c(\PP_{\Sigma})$ of the full subcategory $D^{\b}(\PP_{\Sigma})^c$ of $D^{\b}(\PP_{\Sigma})$ consisting of complexes supported on the compact subset $\pi_{\Sigma}^{-1}(0)$, where $\pi_{\Sigma}\colon\PP_{\Sigma}\rightarrow\Spec\C[C^{\vee}\cap N^{\vee}]$ is the structure morphism of the toric stack. Furthermore, there is a compactly supported Chern character $\ch^c\colon K_0^c(\PP_{\Sigma})\xrightarrow{\lowsim}H^c$, from which one can regard the cohomology-valued Gamma series solution as a $K_0^c(\PP_{\Sigma})$-valued solution.

\begin{remark}
	Another version of derived category with compact support has appeared in the literature, defined as the full subcategory of the bounded derived category of coherent sheaves $D^{\b}(X)$ consisting of complexes supported on \textit{arbitrary} compact subsets of $X$. The Grothendieck group of this category is isomorphic to the direct limit of the usual $K$-groups of all compact subsets of $X$ (see \textit{e.g.} \cite[Appendix~2]{shoemaker}). It is not clear whether it agrees with our version of compactly supported $K$-theory.
\end{remark}

The main theorem of this section is the following analogous result for the dual system $\bbGKZ(C^{\circ},0)$.

\begin{theorem}\label{conj for dual}
	The following diagram commutes:
	\begin{align}
    \xymatrix{
K_0^c\left(\PP_{\Sigma_+}\right)^{\vee}\ar[d]^{(\FM^c)^{\vee}}\ar[r]^-{-\circ\Gamma^{\circ}_+} & \Sol(\bbGKZ(C^{\circ}),U_+)\ar[d]^{\MB^c} \\
K_0^c\left(\PP_{\Sigma_-}\right)^{\vee}\ar[r]^-{-\circ\Gamma^{\circ}_-} & \Sol(\bbGKZ(C^{\circ}),U_-)\rlap{.} \\
    }
\end{align}
\end{theorem}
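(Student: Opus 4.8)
The plan is to deduce Theorem \ref{conj for dual} from Theorem \ref{conj for usual system} by transporting everything through the duality between $\mathrm{bbGKZ}(C,0)$ and $\mathrm{bbGKZ}(C^{\circ},0)$ established in \cite{BHan}, rather than by repeating the Mellin--Barnes computation. First I would recall precisely the form of that duality: on the analytic side it is a perfect pairing $\operatorname{Sol}(\mathrm{bbGKZ}(C,U))\times\operatorname{Sol}(\mathrm{bbGKZ}(C^{\circ},U))\to\C$ (locally constant in $U$), and on the geometric side it is the Euler pairing $K_0(\PP_\Sigma)\times K_0^c(\PP_\Sigma)\to\C$, $(\mathcal E,\mathcal F)\mapsto\chi(\PP_\Sigma,\mathcal E\otimes\mathcal F)$, which identifies $K_0^c(\PP_\Sigma)$ with $K_0(\PP_\Sigma)^\vee$ and vice versa. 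The key compatibility, which is exactly the content of the Borisov--Horja duality conjecture settled in \cite{BHan}, is that the Gamma-series isomorphisms $-\circ\Gamma_\Sigma$ and $-\circ\Gamma^\circ_\Sigma$ intertwine these two pairings (up to the combinatorial normalization built into the $F_\sigma$ and the $\prod_{i\in\sigma}D_i^{-1}$ factors). I would state this as the one external input of the section.

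Granting that, the argument is diagram chasing. Dualizing the commuting square of Theorem \ref{conj for usual system} with respect to the Euler pairing on the left column and the analytic duality pairing on the right column turns $\operatorname{FM}^\vee$ into $\operatorname{FM}$ on $K_0$, which under the Euler pairing is the transpose of the compactly-supported Fourier--Mukai transform $\operatorname{FM}^c$ on $K_0^c$ — this is the standard adjunction $\chi(\mathrm{FM}(\mathcal E)\otimes\mathcal F)=\chi(\mathcal E\otimes(\mathrm{FM}^c)^{-1}\mathcal F)$ for the flop functor, so $(\operatorname{FM}^c)^\vee$ appears on the left column exactly as in the statement; likewise the analytic continuation $\operatorname{MB}$ on $\operatorname{Sol}(\mathrm{bbGKZ}(C,-))$ dualizes to $\operatorname{MB}^c$ on $\operatorname{Sol}(\mathrm{bbGKZ}(C^{\circ},-))$ because analytic continuation along the fixed path $\lambda$ preserves the (locally constant) duality pairing. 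Chasing these identifications through the dual square, using that the horizontal arrows $-\circ\Gamma^\circ_\pm$ are precisely the maps dual to $-\circ\Gamma_\pm$ under the two pairings, yields the commutativity of the square in Theorem \ref{conj for dual}.

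There is one technical point I would verify carefully rather than wave through: the duality pairing on solution spaces and the Euler pairing on $K$-groups must be checked to be compatible with the chosen basepoints/path so that "dual of $\operatorname{MB}$" is literally $\operatorname{MB}^c$ and not $\operatorname{MB}^c$ composed with some monodromy; this is why the restriction $\arg(y)\in(-2\pi,0)$ along $\lambda$ matters here too, since it is what makes the continuation canonical on both systems simultaneously. I would also record that the Chern-character normalizations $ch$ and $ch^c$ are compatible with the Euler pairing — i.e. that under $ch\otimes ch^c$ the Euler pairing becomes the orbifold Poincaré pairing $\bigoplus_\gamma H_\gamma\times\bigoplus_\gamma H^c_{\bar\gamma}\to\C$ paired sector-against-opposite-sector with the Todd/Gamma-class corrections — so that the factors $C_{\gamma^{(k,r)}}$ and the substitution $D_j\to D_j-\tfrac{h_j}{h_k}D_k$ transpose correctly. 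I expect the main obstacle to be precisely this bookkeeping of normalization constants: verifying that the twisted-sector-wise constants appearing in Proposition \ref{essential AC}/Corollary \ref{essential FM} are self-consistent under dualization, i.e. that the adjoint of "multiply sector $\gamma$ by $C_{\gamma^{(k,r)}}$ and substitute divisors" is the analogous operation for the compactly-supported series with the opposite twisted sectors, so that no stray scalar survives. Everything else is formal once the duality theorem of \cite{BHan} is invoked.
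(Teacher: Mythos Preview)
Your proposal is correct and follows essentially the same route as the paper: deduce the compactly-supported square from Theorem \ref{conj for usual system} by invoking the duality of \cite{BHan}, the fact that $\operatorname{FM}$ preserves the Euler pairing (since it is an equivalence), and the fact that analytic continuation preserves the locally constant pairing of solution spaces. One remark: your anticipated ``main obstacle'' of checking that the sector-wise constants $C_{\gamma^{(k,r)}}$ and divisor substitutions transpose correctly is not actually needed, because the argument operates entirely at the level of the abstract pairings and their nondegeneracy, never unpacking the explicit formulae from Section~\ref{sec.ac}--\ref{sec.fm}; the paper's five-line chain of equalities makes this clear.
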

 
The first goal is to prove that there is a well-defined compactly supported $K$-theoretic Fourier--Mukai transform $\FM^c\colon K_0^c(\PP_{\Sigma_-})\rightarrow K_0^c(\PP_{\Sigma_+})$.

\begin{lemma}\label{small lemma}
    Let $f\colon\PP_{\hat{\Sigma}}\rightarrow\PP_{\Sigma}$ be the weighted blow-up along the closed substack $\PP_{\Sigma/\sigma_I}$, where $\sigma_I$ is a cone in $\Sigma$ $($not necessarily an interior cone$)$. Then we have
    \begin{align*}
        f^{-1}(\pi_{\Sigma}^{-1}(0))\subseteq \pi_{\hat{\Sigma}}^{-1}(0)\quad\text{and}\quad f(\pi_{\hat{\Sigma}}^{-1}(0))\subseteq \pi_{\Sigma}^{-1}(0). 
    \end{align*}
\end{lemma}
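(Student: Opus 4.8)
The statement is a comparison of support loci under the weighted blow-up $f\colon\PP_{\hat\Sigma}\to\PP_\Sigma$, and the cleanest route is to work torically and reduce both inclusions to a statement about the fans. Recall that $\pi_\Sigma\colon\PP_\Sigma\to\operatorname{Spec}\C[C^\vee\cap N^\vee]$ is the canonical map to the affine toric variety $X$ associated to the cone $C$, and $\pi_\Sigma^{-1}(0)$ is the fiber over the torus-fixed point $0\in X$. The first thing I would record is a combinatorial description of this fiber: $\pi_\Sigma^{-1}(0)$ is the union of the closures of the torus orbits $O_\tau$ for those cones $\tau\in\Sigma$ whose relative interior maps into $\{0\}$, equivalently those $\tau$ with $\operatorname{span}(\tau)\supseteq\operatorname{span}(C)$ — i.e., the cones of $\Sigma$ that are ``full-dimensional in the linear span of $C$'', or more precisely the cones not contained in any proper face of $C$. (When $C$ is full-dimensional in $N_\R$, this is exactly the set of maximal cones and their faces that are interior cones; in general one restricts to the linear span.) Granting this description, the lemma becomes a purely combinatorial comparison between the cones of $\Sigma$ and of $\hat\Sigma$.

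With this in hand the proof is a direct check. For the inclusion $f^{-1}(\pi_\Sigma^{-1}(0))\subseteq\pi_{\hat\Sigma}^{-1}(0)$: the map $f$ comes from the subdivision $\hat\Sigma\to\Sigma$ which only refines the star of $\sigma_I$ by inserting the ray $\hat v$ (here $\hat v=\sum_{i\in I_+}h_i v_i$ as in Section \ref{sec.basics}, or in the blow-up language $\hat v$ is the sum of the generators of $\sigma_I$). A cone $\hat\tau\in\hat\Sigma$ maps under $f$ into the orbit closure indexed by the smallest cone $\tau\in\Sigma$ containing $\hat\tau$. If $\hat\tau\subseteq f^{-1}(\pi_\Sigma^{-1}(0))$ then this $\tau$ is an interior cone of $\Sigma$ in the sense above; since refining a cone by adding a ray $\hat v$ that lies in $\operatorname{span}(\sigma_I)\subseteq\operatorname{span}(\tau)$ does not change linear spans, every cone $\hat\tau$ of $\hat\Sigma$ lying over such a $\tau$ again has $\operatorname{span}(\hat\tau)\supseteq\operatorname{span}(C)$, hence $\hat\tau$ is interior in $\hat\Sigma$ and $O_{\hat\tau}\subseteq\pi_{\hat\Sigma}^{-1}(0)$. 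For the reverse-direction inclusion $f(\pi_{\hat\Sigma}^{-1}(0))\subseteq\pi_\Sigma^{-1}(0)$: an interior cone $\hat\tau$ of $\hat\Sigma$ is sent by $f$ into $\overline{O_\tau}$ where $\tau$ is the smallest cone of $\Sigma$ containing $\hat\tau$; since $\operatorname{span}(\tau)\supseteq\operatorname{span}(\hat\tau)\supseteq\operatorname{span}(C)$, the cone $\tau$ is interior in $\Sigma$, so $\overline{O_\tau}\subseteq\pi_\Sigma^{-1}(0)$. Both inclusions then follow by taking unions over the relevant cones.

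Alternatively — and this may be the slicker writeup — one can avoid the orbit-by-orbit bookkeeping entirely by using commutativity of the square $\pi_{\hat\Sigma}=\pi_\Sigma\circ f$, which holds because both $\pi$'s are the canonical maps to the same affine toric variety $X$ (the subdivision $\hat\Sigma$ and $\Sigma$ have the same support $C$). Then $f^{-1}(\pi_\Sigma^{-1}(0))=\pi_{\hat\Sigma}^{-1}(0)$ on the nose for the preimage, giving the first inclusion as an equality; and $f(\pi_{\hat\Sigma}^{-1}(0))=f(f^{-1}(\pi_\Sigma^{-1}(0)))\subseteq\pi_\Sigma^{-1}(0)$, giving the second. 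The only thing needing justification is $\pi_{\hat\Sigma}=\pi_\Sigma\circ f$, which is immediate from the universal property / construction of toric morphisms from a common refinement of fans.

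The step I expect to be the genuine (if minor) obstacle is pinning down the description of $\pi_\Sigma^{-1}(0)$ in the stacky, possibly non-full-dimensional setting of the paper — i.e., making sure ``interior cone'' is the right index set for the components of the fiber over $0$, and that the toric-morphism factorization $\pi_{\hat\Sigma}=\pi_\Sigma\circ f$ is literally correct for Deligne–Mumford stacks rather than just coarse spaces. Once that bookkeeping is fixed, both inclusions are formal, and the second (commutative-square) approach makes the proof essentially a one-liner.
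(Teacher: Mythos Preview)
Your alternative approach is exactly the paper's proof: the paper records the commutative triangle $\pi_{\hat\Sigma}=\pi_\Sigma\circ f$, passes to the induced Cartesian square on complements of the fibers over $0$, and reads off both inclusions from commutativity. One small caveat on your first, orbit-by-orbit route: the two characterizations you give of the fiber $\pi_\Sigma^{-1}(0)$ are not equivalent --- ``$\tau$ not contained in any proper face of $C$'' is the correct one, whereas ``$\operatorname{span}(\tau)\supseteq\operatorname{span}(C)$'' picks out only the maximal cones (an interior ray in a two-dimensional fan satisfies the former but not the latter), and your span-preservation argument under subdivision does not quite go through as written. Since you already flag the commutative-square approach as the cleaner writeup and it matches the paper, this is moot.
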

\begin{proof}
    The structure morphisms are compatible with the blow-up
    \begin{align*}
    	\xymatrix{
    	\PP_{\hat{\Sigma}}\ar[r]^{f}\ar[dr]_{\pi_{\hat{\Sigma}}} &  \PP_{\Sigma}\ar[d]^{\pi_{\Sigma}} \\
    	 & \Spec\C[C^{\vee}\cap N^{\vee}]\rlap{,}\\
    	}
    \end{align*}
    which induces the following Cartesian diagram: 
    \begin{align*}
    	\xymatrix{
    	\PP_{\hat{\Sigma}}\backslash \pi_{\hat{\Sigma}}^{-1}(0)\ar@{^(->}[r]\ar[d] & \PP_{\hat{\Sigma}}\ar[d]^{f} \\
    	\PP_{\Sigma}\backslash \pi_{\Sigma}^{-1}(0)\ar@{^(->}[r] & \PP_{\Sigma}\rlap{.} \\
    	}
    \end{align*}
    The lemma follows directly from the commutativity of this diagram.
\end{proof}

\begin{theorem}
    The Fourier--Mukai transform $\FM\colon D^{\b}(\PP_{\Sigma_-})\rightarrow D^{\b}(\PP_{\Sigma_+})$ maps $D^{\b}(\PP_{\Sigma_-})^c$ to $D^{\b}(\PP_{\Sigma_+})^c$.
\end{theorem}
\begin{proof}
    The center $\PP_{\Sigma_-/I_+}$ of the blow-up $\PP_{\hat{\Sigma}}\rightarrow\PP_{\Sigma_-}$ can be viewed as the zero locus of a regular section of the vector bundle $E=\bigoplus_{j\in I_+}\mathscr{O}_{\PP_{\Sigma_-}}(D_j)$; therefore, the blow-up $f_-$ can be decomposed as
    \begin{align*}
        \xymatrix{
\PP_{\hat{\Sigma}} \ar@{^(->}[r]^-{i}\ar[dr]_{f_-}  &  \PP_{\PP_{\Sigma_-}}(E^{\vee})\ar[d]^{p} \\
  &  \PP_{\Sigma_-}\rlap{,}        
        }
    \end{align*}
    where $\PP_{\PP_{\Sigma_-}}(E^{\vee})$ is the projective bundle associated to $E^{\vee}$ and $p$ is the projection. This is well known for varieties and can be proved for stacks similarly.

    Let $\mathscr{F}$ be an arbitrary coherent sheaf on $\PP_{\Sigma_-}$ supported on $\pi_{\Sigma_-}^{-1}(0)$. We first prove that $L(f_-)^*(\mathscr{F})$ is supported on $\pi_{\hat{\Sigma}}^{-1}(0)$. Since $i$ is a closed immersion, it suffices to show that $i_* L(f_-)^*\mathscr{F}$ is supported on the image of $\pi_{\hat{\Sigma}}^{-1}(0)$ under $i$. Note that we have
    \begin{align*}
        i_* L(f_-)^*\mathscr{F}=i_* Li^* p^*
\mathscr{F}\cong p^*\mathscr{F}\otimes^{L} i_*\mathscr{O}_{\PP_{\hat{\Sigma}}}; 
    \end{align*}
    here we used the facts that $p$ is flat, $i_*$ is exact and $R(f_-)_*\mathscr{O}_{\PP_{\hat{\Sigma}}}=\mathscr{O}_{\PP_{\Sigma_-}}$. From this we have
    \begin{align*}
        \supp(i_* L(f_-)^*\mathscr{F})&\subseteq \supp(p^*\mathscr{F})\cap\supp\left(i_*\mathscr{O}_{\PP_{\hat{\Sigma}}}\right)\subseteq p^{-1}(\supp(\mathscr{F}))\cap i\left(\PP_{\hat{\Sigma}}\right)\\
        &\subseteq p^{-1}\left(\pi_{\Sigma_-}^{-1}(0)\right)\cap i\left(\PP_{\hat{\Sigma}}\right). 
    \end{align*}
    It suffices to show that $p^{-1}(\pi_{\Sigma_-}^{-1}(0))\cap i(\PP_{\hat{\Sigma}})\subseteq i(\pi_{\hat{\Sigma}}^{-1}(0))$, which is again equivalent to $(f_-)^{-1}(\pi_{\Sigma_-}^{-1}(0))\subseteq \pi_{\hat{\Sigma}}^{-1}(0)$. Applying Lemma~\ref{small lemma}, we get the desired result.

    Now consider an arbitrary complex $\mathscr{F}^{\bullet}$ in $D^{\b}(\PP_{\Sigma_-})^c$. We argue by induction on the length of $\mathscr{F}^{\bullet}$. Denote the lowest degree of non-zero cohomology of $\mathscr{F}^{\bullet}$ by $i_0$; then there exists a distinguished triangle
    \begin{align*}
        \mathscr{H}^{i_0}(\mathscr{F}^{\bullet})[-i_0]\longrightarrow \mathscr{F}^{\bullet} \longrightarrow \mathscr{G}^{\bullet}\longrightarrow\mathscr{H}^{i_0}(\mathscr{F}^{\bullet})[1-i_0], 
    \end{align*}
    where the $\supth{i}$ cohomology of $\mathscr{G}^{\bullet}$ is isomorphic to that of $\mathscr{F}^{\bullet}$ for all $i>i_0$ and the $\supth{i_0}$ cohomology is zero. Applying the derived pullback, we get a distinguished triangle in $D^{\b}(\PP_{\hat{\Sigma}})$
    \begin{align*}
        L(f_-)^*\mathscr{H}^{i_0}(\mathscr{F}^{\bullet})[-i_0]\longrightarrow L(f_-)^*\mathscr{F}^{\bullet} \longrightarrow L(f_-)^*\mathscr{G}^{\bullet}\longrightarrow L(f_-)^*\mathscr{H}^{i_0}(\mathscr{F}^{\bullet})[1-i_0].
    \end{align*}
    By the induction assumption, both $L(f_-)^*\mathscr{H}^{i_0}(\mathscr{F}^{\bullet})$ and $L(f_-)^*\mathscr{G}^{\bullet}$ are supported on $\pi_{\hat{\Sigma}}^{-1}(0)$. Taking stalks of this distinguished triangle, we get that $L(f_-)^*\mathscr{F}^{\bullet}$ is also supported on $\pi_{\hat{\Sigma}}^{-1}(0)$.

    Next we look at the pushforward $R(f_+)_*\colon D^{\b}(\PP_{\hat{\Sigma}})\rightarrow D^{\b}(\PP_{\Sigma_+})$. Take an arbitrary complex $\mathscr{K}^{\bullet}$ in $D^{\b}(\PP_{\hat{\Sigma}})^c$. Consider the following Cartesian diagram used in the proof of the lemma:
    \begin{align*}
    	\xymatrix{
    	\PP_{\hat{\Sigma}}\backslash \pi_{\hat{\Sigma}}^{-1}(0)\ar@{^(->}[r]\ar[d] & \PP_{\hat{\Sigma}}\ar[d]^{f_+} \\
    	\PP_{\Sigma_+}\backslash \pi_{\Sigma_+}^{-1}(0)\ar@{^(->}[r] & \PP_{\Sigma}\rlap{.} \\
    	}
    \end{align*}
    We apply the flat base change formula and see that the restriction of $R(f_+)_*\mathscr{K}^{\bullet}$ to $\PP_{\Sigma_+}\backslash \pi_{\Sigma_+}^{-1}(0)$ is zero; that is, $R(f_+)_*\mathscr{K}^{\bullet}$ is supported on $\pi_{\Sigma_+}^{-1}(0)$. So $R(f_+)_*$ maps $D^{\b}(\PP_{\hat{\Sigma}})^c$ into $D^{\b}(\PP_{\Sigma_+})^c$.
\end{proof}

Therefore, the Fourier--Mukai transform induces an isomorphism $\FM^c\colon K_0^c(\PP_{\Sigma_-})\rightarrow K_0^c(\PP_{\Sigma_+})$ of compactly supported K-theories. Now we are ready to prove the main theorem of this section.

\begin{proof}[Proof of Theorem~\ref{conj for dual}]
Since $\FM$ is an equivalence of categories, we have
\begin{equation}\label{FM preserves Euler pairing}
\begin{split}
	\chi_-([\mathscr{E}_1^{\bullet}],[\mathscr{E}_2^{\bullet}])&=\sum_{i}(-1)^i \dim\Hom_{D^{\b}(\PP_{\Sigma_-})}(\mathscr{E}_1^{\bullet},\mathscr{E}_2^{\bullet}[i])\\
    &=\sum_{i}(-1)^i \dim\Hom_{D^{\b}(\PP_{\Sigma_+})}(\FM(\mathscr{E}_1^{\bullet}),\FM^c(\mathscr{E}_2^{\bullet})[i])\\
    &=\chi_+(\FM([\mathscr{E}_1^{\bullet}]),\FM^c([\mathscr{E}_2^{\bullet}])), 
\end{split}
\end{equation}
where $\chi_{\pm}$ denotes the Euler characteristic pairing on $\PP_{\Sigma_{\pm}}$. Hence $\FM$ preserves the Euler characteristic pairing.

To proceed, we need the following duality result for the pair of better-behaved GKZ systems, proved in \cite[Theorems 2.4 and~4.2]{BHan}. More precisely, there is a non-degenerate pairing between the solution spaces of $\bbGKZ(C,0)$ and $\bbGKZ(C^{\circ},0)$
	\begin{align*}
		\langle-,-\rangle\colon \Sol(\bbGKZ(C,0))\times \Sol(\bbGKZ(C^{\circ},0))\longrightarrow \C 
	\end{align*}
	that corresponds to the inverse of the Euler characteristic pairing in the large radius limit under the isomorphisms given by the Gamma series solutions.

Now consider the diagram 
\begin{align*}
    \xymatrix{
K_0^c\left(\PP_{\Sigma_+}\right)^{\vee}\ar[d]^{(\FM^c)^{\vee}}\ar[r]^-{-\circ\Gamma^{\circ}_+} & \Sol(\bbGKZ(C^{\circ}),U_+)\ar[d]^{\MB^c} \\
K_0^c\left(\PP_{\Sigma_-}\right)^{\vee}\ar[r]^-{-\circ\Gamma^{\circ}_-} & \Sol(\bbGKZ(C^{\circ}),U_-)\rlap{.} \\
    }
\end{align*}
To finish the proof, it suffices to show $(\FM^c)^{\vee}(g)\circ\Gamma^{\circ}_-=\MB^c(g\circ\Gamma^{\circ}_+)$ for any $g\in K_0^c(\PP_{\Sigma_+})^{\vee}$. Take an arbitrary $f\in K_0(\PP_{\Sigma_+})^{\vee}$. We have
\begin{align*}
    \langle(\FM)^{\vee}(f)\circ\Gamma_-, (\FM^c)^{\vee}(g)\circ\Gamma^{\circ}_-\rangle & = \chi_-^{\vee}((\FM)^{\vee}(f),(\FM^c)^{\vee}(g))\\
    & = \chi_+^{\vee}(f,g) \\
    & = \langle f\circ\Gamma_+, g\circ\Gamma^{\circ}_+ \rangle \\
    & = \langle \MB(f\circ\Gamma_+), \MB^c(g\circ\Gamma^{\circ}_+) \rangle \\
    & = \langle (\FM)^{\vee}(f)\circ\Gamma_-, \MB^c(g\circ\Gamma^{\circ}_+) \rangle.
\end{align*}
Here the first and third equalities follows from the duality of bbGKZ systems, the second equality follows from~\eqref{FM preserves Euler pairing}, the fourth equality follows from the definition of analytic continuation, and the last equality is Theorem~\ref{conj for usual system}.

Since the pairing of solutions is non-degenerate and the $(\FM)^{\vee}(f)\circ\Gamma_-$ span the whole solution space, we obtain the desired result.
\end{proof}


\end{document}